\def\0{\mathbf{0}}
\def\eps{\varepsilon}
\def\rr{\rightarrow}
\def\dr{\downarrow}
\def\beqa{\begin{eqnarray}}
\def\eeqa{\end{eqnarray}}
\def\beqas{\begin{eqnarray*}}
\def\eeqas{\end{eqnarray*}}
\newtheorem{theorem}{Theorem}[section]
\newtheorem{lemma}[theorem]{Lemma}
\newtheorem{proposition}[theorem]{Proposition}
\newtheorem{corollary}[theorem]{Corollary}
\newtheorem{definition}[theorem]{Definition}
\numberwithin{equation}{section}
\newcommand{\old}[1]{{}}
\def\endpf{{\ \hfill\hbox{\vrule width1.0ex height1.0ex}\parfillskip 0pt}}
\newenvironment{proof}{\noindent{\bf Proof:}}{\endpf}
\newcommand{\qed}{\hfill\rule{2mm}{2mm}}
\newcommand{\bd}{\begin{displaymath}}
\newcommand{\ed}{\end{displaymath}}
\newcommand{\be}{\begin{equation}}
\newcommand{\ee}{\end{equation}}
\newcommand{\bq}{\begin{eqnarray}}
\newcommand{\eq}{\end{eqnarray}}
\newcommand{\bn}{\begin{eqnarray*}}
\newcommand{\en}{\end{eqnarray*}}
\newcommand{\dl}{\delta}
\title{Sample Path Properties of Volterra Processes}
\author{Leonid Mytnik\thanks{Partly supported by a grant from the  Israel Science Foundation.} \qquad \qquad \qquad Eyal Neuman$\mbox{}^*$ \\ \\ Faculty of Industrial Engineering \\ and Management  \\ Technion - Institute of Technology \\ Haifa 3200 \\ Israel }
\date{}
\begin{document}

\maketitle

\paragraph{Abstract.}
We consider the regularity of sample paths of Volterra processes.
These processes are defined as stochastic integrals

$$ M(t)=\int_{0}^{t}F(t,r)dX(r), \   \ t \in \mathds{R}_{+}, $$
where $X$ is a semimartingale and $F$ is a deterministic real-valued function. We derive the information on the modulus of continuity for these processes under regularity assumptions on the function $F$ and show that $M(t)$ has ``worst'' regularity properties at times of jumps of $X(t)$. We apply our results to obtain the optimal H\"older exponent for fractional L\'{e}vy processes.

\section{Introduction and main results}
\subsection{Volterra Processes} \label{Semimartingales}
A Volterra process is a process given by
\be \label{volterra}
 M(t)=\int_{0}^{t}F(t,r)dX(r), \   \ t \in \mathds{R}_{+},
\ee
where $\{X(t)\}_{t\geq0}$ is a semimartingale and $F(t,r)$ is a bounded deterministic real-valued function of two variables which sometimes is called a kernel.
One of the questions addressed in the research of Volterra and related processes is studying their regularity
properties.
It is also the  main goal of this paper. Before we describe our results let us give a short introduction to this area.
First, let us note that one-dimensional fractional processes, which are the close relative of Volterra processes,
have been extensively studied in the literature. One-dimensional fractional processes are usually defined by
\begin{eqnarray}
\label{frac:1}
X(t)&=& \int_{-\infty}^{\infty} F(t,r) dL(r),
\end{eqnarray}
where $L(r)$ is some stochastic process and $F(t,r)$ is some specific kernel. For example in the case of $L(r)$
being a two-sided standard Brownian motion and $F(t,r)=\frac{1}{\Gamma(H+1/2)}\left((t-s)^{H-1/2}_+ -(-s)^{H-1/2}_+\right)$, $X$ is called fractional Brownian motion with Hurst index $H$ (see  e.g. Chapter 1.2 of~\cite{oks08} and Chapter 8.2 of~\cite{nualart}). It is also known that the fractional Brownian motion with Hurst index $H$ is H\"older
continuous with any exponent less than $H$ (see e.g.~\cite{mandel.vanness}). Another prominent example is the case of fractional $\alpha$-stable L\'{e}vy process which can be also defined via~(\ref{frac:1}) with $L(r)$ being two-sided $\alpha$-stable L\'{e}vy process and
\bd
F(t,r)=a\{(t-r)^d_{+}-(-r)^d_{+}\}+b\{(t-r)^d_{-}-(-r)^d_{-}\}.
\ed \\
Takashima in~\cite{A8} studied path properties of this process.
Takashima set the following conditions on the parameters:
$1<\alpha<2$, $0<d<1-\alpha^{-1}$ and $-\infty<a,b<\infty$, $|a|+|b|\neq 0$. It is proved in \cite{A8} that $X$ is a
self-similar process. Denote the jumps of $L(t)$ by
$\Delta_L (t)$: $\Delta_L(t)=L(t)-L(t-)$, $-\infty<t<\infty$. It is also proved in \cite{A8} that:
\bd
\lim_{h\downarrow 0}(X(t+h)-X(t))h^{-d} = a \Delta_L(t), \  \ 0<t<1, \ \ P- \rm{a.s.},
\ed
\bd
\lim_{h\downarrow 0}(X(t)-X(t-h))h^{-d} = -b \Delta_L(t), \  \ 0<t<1, \ \ P- \rm{a.s.}
\ed
Note that in his proof Takashima strongly used the self-similarity of the process $X$. \\\\
Another well-studied process is the so-called
fractional L\'{e}vy process,
which again is defined via~(\ref{frac:1}) for a specific kernel $F(t,r)$ and $L(r)$ being a two-sided L\'evy process.
For example,  Marquardt in \cite{A4} defined it as follows.
\begin{definition}\label{2} (Definition 3.1 in \cite{A4}): Let $L=\{L(t)\}_{t \in \mathds{R}}$ be a two-sided L\'{e}vy process on $\mathds{R}$ with
 $E[L(1)]=0$, $E[L(1)^2 ]<\infty$ and without a Brownian component. Let $F(t,r)$ be the following kernel function:
\bd
F(t,r)=\frac{1}{\Gamma(d+1)}[(t-r)^d_{+}-(-r)^d_{+}].
\ed
For fractional integration parameter $0<d<0.5$ the stochastic process
\bd
M_d(t)=\int_{-\infty}^{\infty}F(t,r)dL_r, \   \ t \in \mathds{R},
\ed
is called a \textit{fractional L\'{e}vy process}.
\end{definition}
As for the regularity properties of fractional L\'evy process $M_d$ defined above,
Marquardt in \cite{A4} used an isometry of 
$M_d$ and the Kolmogorov continuity
criterion in order to prove that the sample paths of
$M_d$
are $P$-a.s. local H\"{o}lder continuous of any order $\beta < d$.
 Moreover she proved that for every modification of $M_d$ and for every $\beta>d$:
\bd
P(\{\omega \in \Omega:M_d(\cdot,\omega) \not \in C^\beta[a,b]\})>0,
\ed
where $C^\beta[a,b]$ is the space of H\"{o}lder continuous functions of index $\beta$ on $[a,b]$.
Note that in this paper we are going to improve the result of Marquardt and show that for $d\in(0,0.5)$ the sample paths
of $M_d$ are $P$-a.s. H\"{o}lder continuous of any order $\beta \leq d$.  \\\\
The regularity properties of the analogous multidimensional processes have been also studied.
For example, consider the process
\be \label{MRT}
\hat{M}(t)=\int_{\mathds{R}^m}F(t,r)L(dr), \   \ t \in \mathds{R}^N,
\ee
where $L(dr)$ is some random measure and $F$ is a real valued function of two variables. A number of important results have been derived recently by Ayache, Roueff and Xiao in \cite{aya1}, \cite{aya2},
on the regularity properties of $\hat{M}(t)$ for some particular choices of $F$ and $L$.
As for the earlier work on the subject we can refer to K\^{o}no and Maejima in \cite{kono} and \cite{A6}. Recently, the regularity of related fractional processes was studied by Maejima and Shieh in \cite{Maej-Shieh}. 
We should also mention the book of Samorodnitsky and Taqqu \cite{A3} and the work of Marcus and Rosi\'{n}sky in \cite{marcus} where the regularity properties of processes related to $\hat{M}(t)$ in (\ref{MRT}) were also studied.

\subsection{Functions of Smooth Variation} \label{smoothvar}
In this section we make our assumptions on the kernel function $F(s,r)$ in (\ref{volterra}).
First we introduce the following notation. Denote
\bd
f^{(n,m)}(s,r) \equiv \frac{\partial^{n+m} f(s,r) }{\partial s^{n}  \partial r^{m}}, \ \ \forall n,m=0,1,\ldots.
\ed
We also define the following sets in $\mathds{R}^2$:
\bd
E=\{(s,r):-\infty<r \leq s < \infty \},
\ed
\bd
\tilde{E}=\{(s,r):-\infty<r < s < \infty \}.
\ed
We denote by $K$ a compact set in $E$, $\tilde{E}$ or $\mathds{R}$, depending on the context.
We define the following spaces of functions that are essential for the definition of functions
of \emph{smooth variation} and \emph{regular variation}.
\begin{definition} \label{CE}
Let $\mathbb{C}_{+}^{(k)}(E)$ denote the space of functions $f$ from a domain $E$ in $\mathds{R}^2$ to $\mathds{R^1}$ satisfying  \\
1. $f$ is continuous on $E$;\\
2. $f$ has continuous partial derivatives of order $k$ on $\tilde{E}$. \\
3. $f$ is strictly positive on $\tilde{E}$.
\end{definition}
Note that functions of smooth variation of one variable have been studied extensively in the literature; \cite{reg} is the standard reference for these and related functions.
Here we generalize the definition of functions of smooth variation to functions on $\mathds{R}^2$.
\begin{definition} \label{smt.var2.0}
Let $\rho>0$. Let $f \in \mathbb{C}_{+}^{(2)}(E)$ satisfy, for every compact set $K \subset \mathds{R}$, \\
a)
\bd
\lim_{h\downarrow 0}\sup_{ t \in K}\bigg| \frac{h f^{(0,1)}(t,t-h)}{f(t,t-h)}+\rho\bigg|=0,
\ed
b)
\bd
\lim_{h\downarrow 0}\sup_{t \in K}\bigg| \frac{h f^{(1,0)}(t+h,t)}{f(t+h,t)}-\rho\bigg|=0,
\ed
c)
\bd
\lim_{h\downarrow 0}\sup_{t \in K}\bigg| \frac{h^2 f^{(1,1)}(t,t-h)}{f(t,t-h)}+\rho(\rho-1)\bigg|=0,
\ed
d)
\bd
\lim_{h\downarrow 0}\sup_{t \in K}\bigg| \frac{h^2 f^{(0,2)}(t,t-h)}{f(t,t-h)}-\rho(\rho-1)\bigg|=0.
\ed
Then $f$ is called a function of smooth variation of index $\rho$ at the diagonal and is denoted as
$f \in SR_\rho^2(0+)$.
\end{definition}
It is easy to check that $f \in SR_\rho^2(0+)$, for $\rho>0$ satisfies $f(t,t)=0$ for all $t$. 
The trivial example for a function of smooth variation $SR_\rho^2(0+)$ is $f(t,r)=(t-r)^{\rho}$.
Another example would be $f(t,r)=(t-r)^{\rho}|\log(t-r)|^{\eta}$ where $\eta\in \mathds{R}$.

\subsection{Main Results} \label{TheoremsSection}

\paragraph{\textbf{Convention:}}
From now on we consider a semimartingale $\{X(t)\}_{t\geq 0}$ such that $X(0)=0$ $P$-a.s. Without
loss of generality we assume further that $X(0-)=0$, $P$-a.s. \medskip  \\
In this section we present our main results.
The first theorem gives us information about the regularity of increments of the process $M$.
\begin{theorem} \label{thm1}
Let $F(t,r)$ be a function of smooth variation of index $d\in (0,1)$ and let $\{X(t)\}_{t\geq 0}$ be a semimartingale.
Define
\begin{displaymath}
M(t) =  \int_{0}^{t}F(t,r)dX(r), \ \ t\geq 0.
\end{displaymath}
Then,
\begin{displaymath}
\lim_{h\downarrow 0}  \frac{M(s+h)-M(s)}{F(s+h,s)}=\Delta_{X}(s) , \ \ \forall s \in [0,1], \ \ P-\rm{a.s.},
\end{displaymath}
where $\Delta_{X}(s)=X(s)-X(s-)$.
\end{theorem}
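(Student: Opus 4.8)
The plan is to expand $M(s+h)-M(s)$ so that the jump of $X$ at time $s$ appears as an explicit main term and everything else is negligible relative to $F(s+h,s)$. Writing both stochastic integrals over $[0,s+h]$ and isolating the jump of $X$ at $s$ from the common range $[0,s]$ gives
\[
M(s+h)-M(s) = F(s+h,s)\,\Delta_X(s) + I_1(s,h) + I_2(s,h),
\]
with $I_1(s,h)=\int_{[0,s)}\bigl(F(s+h,r)-F(s,r)\bigr)\,dX(r)$ and $I_2(s,h)=\int_{(s,s+h]}F(s+h,r)\,dX(r)$; here I used $F(s,s)=0$, so the integrand of the atom at $s$ is $F(s+h,s)-F(s,s)=F(s+h,s)$. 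After dividing by $F(s+h,s)$ the theorem reduces to proving that $I_1(s,h)$ and $I_2(s,h)$ are $o\bigl(F(s+h,s)\bigr)$ as $h\downarrow0$, simultaneously for all $s\in[0,1]$, $P$-almost surely. It is convenient to record the pathwise form obtained from integration by parts, $M(t)=-\int_0^t X(r-)\,f^{(0,1)}(t,r)\,dr$ (valid since, as $d>0$, $f^{(0,1)}(t,\cdot)$ is integrable and $F(t,\cdot)$ is absolutely continuous on $[0,t]$ with $F(t,t)=0$ and $X(0)=0$), which turns the finite-variation estimates below into ordinary Lebesgue integrals.

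To make the uniform-in-$s$ statement tractable I would truncate jumps. Fix $\eps>0$ and write $X=X^{\eps}+Y^{\eps}$, where $X^{\eps}$ is the finite sum of the jumps of modulus exceeding $\eps$ on $[0,1]$ and $Y^{\eps}$ is a semimartingale with jumps bounded by $\eps$. For the finite-jump part $M^{\eps}(t)=\sum_i F(t,s_i)\,\Delta_X(s_i)$ the limit is computed exactly: a past jump $s_i<s$ contributes $\bigl(F(s+h,s_i)-F(s,s_i)\bigr)\Delta_X(s_i)=O(h)$, which is $o(F(s+h,s))$ since $F(s+h,s)$ is of order $h^{d}$ with $d<1$; for $h$ small no large-jump time lies in $(s,s+h]$; and the atom at $s$ yields exactly $\Delta_{X^{\eps}}(s)$. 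It then suffices to show the remainder estimate
\[
\limsup_{h\downarrow0}\ \sup_{s\in[0,1]}\frac{\lvert M_{Y^{\eps}}(s+h)-M_{Y^{\eps}}(s)\rvert}{F(s+h,s)}\le C\eps,\qquad P\text{-a.s.}
\]
Combining this with $\lvert\Delta_{Y^{\eps}}(s)\rvert\le\eps$ bounds $\limsup_{h\downarrow0}\lvert (M(s+h)-M(s))/F(s+h,s)-\Delta_X(s)\rvert$ by a constant multiple of $\eps$ for every $s$ at once, and letting $\eps\downarrow0$ finishes the proof.

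For the remainder I would estimate $I_1$ and $I_2$ (now built from $Y^{\eps}$) via the semimartingale decomposition $Y^{\eps}=A+N$, and this is where the smooth-variation hypotheses are used quantitatively. Condition (b) controls $\partial_tF$ near the diagonal, so that $F(s+h,r)-F(s,r)=\int_s^{s+h}f^{(1,0)}(u,r)\,du$ is $O(h)$ for $r$ bounded away from $s$ and is dominated by $F(s+h,s)$ when $r$ is within $O(h)$ of $s$ (using subadditivity of the profile $x^{\rho}$, $\rho=d\in(0,1)$); condition (a) gives $f^{(0,1)}<0$ near the diagonal, hence $F(s+h,r)\le F(s+h,s)$ for $r\in(s,s+h]$. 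With these kernel bounds the finite-variation part obeys $\lvert\int F\,dA\rvert\le \sup F\cdot\lvert A\rvert(\text{window})$, while the local-martingale part is controlled through $\int F(s+h,r)^2\,d[N]_r\le F(s+h,s)^2\,[N](\text{window})$; since the mass of the window tends to $0$ and the jumps of $Y^{\eps}$ are at most $\eps$, both contributions are of order $\eps\,F(s+h,s)$.

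The main obstacle is upgrading these pointwise-in-$(s,h)$ bounds to estimates uniform over all $s\in[0,1]$ and all small $h$, $P$-almost surely, i.e.\ establishing the displayed remainder inequality. I would evaluate the increments along a dyadic grid of base points $s$ and scales $h$, apply Doob and Burkholder--Davis--Gundy maximal inequalities to the martingale contributions to obtain summable tail bounds, invoke Borel--Cantelli, and then fill the gaps between grid points using monotonicity of $F$ in its arguments and the right-continuity of the window masses $\lvert A\rvert(\cdot)$ and $[N](\cdot)$. The delicate regime is the interplay, within $I_1$ near the diagonal and within the forward integral $I_2$, between the blow-up of the kernel's derivatives and the smallness of the increments of $Y^{\eps}$; it is precisely there that the second-order smooth-variation conditions (c) and (d) may be needed to obtain sharp enough control of $F(s+h,r)-F(s,r)$, ensuring the normalized remainder stays of order $\eps$ rather than larger. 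Once the uniform $C\eps$ bound is secured, sending $\eps\downarrow0$ closes the argument.
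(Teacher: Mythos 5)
Your opening decomposition is sound: isolating the atom at $s$ (using $F(s,s)=0$) to get $M(s+h)-M(s)=F(s+h,s)\Delta_X(s)+I_1+I_2$ is a legitimate reformulation, and your kernel estimates near and away from the diagonal are in the right spirit. But the proof has a genuine gap at its load-bearing step: the uniform remainder bound $\limsup_{h\downarrow 0}\sup_{s}|M_{Y^{\eps}}(s+h)-M_{Y^{\eps}}(s)|/F(s+h,s)\le C\eps$ is only asserted, with a strategy (dyadic grid, Doob/BDG, Borel--Cantelli) that does not go through as described. For the compensated small-jump martingale part $N$ of $Y^{\eps}$, the BDG bound gives $E\bigl|\int_{(s,s+h]}F(s+h,r)\,dN_r\bigr|^p\lesssim F(s+h,s)^p\,E\bigl[([N]_{s+h}-[N]_s)^{p/2}\bigr]$, and for any L\'evy-type $N$ with nontrivial jump part the right-hand expectation is of order $h$ (not $h^{p/2}$), because with probability $\asymp h$ a single jump of size $\asymp\eps$ occurs in the window. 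The resulting tail bound at each of the $\asymp h^{-1}$ grid points at scale $h$ is of order $h$, so the expected number of bad grid points per scale is order one and Borel--Cantelli yields nothing. This is not a technicality: the increments of $M$ are driven by individual jumps and are not concentrated, so no chaining argument based on moment bounds can deliver the $C\eps$ estimate. You would also need to justify the jump truncation $X=X^{\eps}+Y^{\eps}$ at the level of the stochastic integral (the compensator of the small jumps must be absorbed correctly), which you do not address.

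Note also that you are proving more than the theorem asks: the statement ``$\forall s\in[0,1]$, $P$-a.s.'' requires a single null set, not a uniform-in-$s$ rate, so the $\sup_s$ formulation (which is essentially Theorem \ref{thm2}) is an unnecessary escalation. The paper avoids all probabilistic estimates: after the integration by parts of Lemma \ref{IntParts} (which you also mention but then abandon), $M(t)=-\int_0^t f(t,r)X(r)\,dr$ is an ordinary Lebesgue integral of the fixed c\`adl\`ag path against a deterministic kernel, the increment is split as in Lemma \ref{Varchange} into a ``new mass'' integral over $[t,t+\dl]$ and a ``kernel variation'' integral over $[0,t]$, and Propositions \ref{gUnifConv} and \ref{con1} identify their normalized limits as $\frac1d X(t)$ and $-\frac1d X(t-)$ using only the smooth-variation asymptotics of $f$ and the existence of one-sided limits of $X$. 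Everything is deterministic for each $\omega$ in the full-measure set where $X(\cdot,\omega)$ is c\`adl\`ag, which is why the null set is automatically independent of $s$. To repair your argument you should either carry out the pathwise analysis after integration by parts, or find a substitute for the chaining step that exploits the path structure of the jumps rather than their moments.
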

Information about the regularity of the sample paths of $M$ given in the above theorem is very precise in the case when the process $X$ is discontinuous. In fact, it shows that at the point of jump $s$, the increment of the process
behaves like $F(s+h,s)\Delta_X(s)$. \\\\
In the next theorem we give a uniform in time bound on the increments of the process $M$.
\begin{theorem}\label{thm2}
Let $F(t,r)$ and $\{M(t)\}_{t\geq 0}$ be as in Theorem \ref{thm1}. Then
\begin{displaymath}
\lim_{h\downarrow 0} \ \  \sup_{0< s<t <1,\ \ |t-s|\leq h}\frac{|M(t)-M(s)|}{F(t,s)} =  \sup_{s\in [0,1]} |\Delta _{X}(s)|, \   \ P-\rm{a.s.}
\end{displaymath}
\end{theorem}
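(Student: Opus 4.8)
The plan is to establish the two bounds $\liminf_{h\downarrow0}(\cdots)\ge J$ and $\limsup_{h\downarrow0}(\cdots)\le J$ separately, writing $J:=\sup_{s\in[0,1]}|\Delta_{X}(s)|$ for the right-hand side. Since $X$ is a semimartingale it is c\`adl\`ag, so on $[0,1]$ it has only finitely many jumps exceeding any fixed threshold; in particular $J<\infty$, the extremal jumps are isolated, and for every $\epsilon>0$ the times $s_1,\dots,s_N$ at which $|\Delta_X|>\epsilon$ are finite in number and separated by a fixed gap $\gamma_0>0$. The lower bound is then immediate from Theorem \ref{thm1}: for an interior jump time $s_0$ I insert the admissible pair $(s,t)=(s_0,s_0+h)$ into the supremum, and Theorem \ref{thm1} gives $|M(s_0+h)-M(s_0)|/F(s_0+h,s_0)\to|\Delta_X(s_0)|$, so the limit of the supremum is at least $|\Delta_X(s_0)|$. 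Taking the supremum over interior jump times, with a routine approximation if the extremal jump sits at an endpoint, yields $\liminf_{h\downarrow0}(\cdots)\ge J$.

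For the upper bound I fix $\epsilon>0$ and split $X=X^{(\epsilon)}+R^{(\epsilon)}$, where $X^{(\epsilon)}$ carries the finitely many jumps of modulus $>\epsilon$ and $R^{(\epsilon)}$ is a semimartingale all of whose jumps are bounded by $\epsilon$; correspondingly $M=M^{(\epsilon)}+M^{R}$. For the finite-jump part I use $M^{(\epsilon)}(t)-M^{(\epsilon)}(s)=\sum_{s_i\in(s,t]}F(t,s_i)\Delta_X(s_i)+\sum_{s_i\le s}[F(t,s_i)-F(s,s_i)]\Delta_X(s_i)$. Using the smooth-variation comparison $F(u,v)\approx(u-v)^d$ near the diagonal (made precise by Definition \ref{smt.var2.0}) together with subadditivity of $x\mapsto x^d$ for $d\in(0,1)$, one gets $[F(t,s_i)-F(s,s_i)]/F(t,s)\le 1+o(1)$ and $F(t,s_j)/F(t,s)\le 1+o(1)$, so each individual jump contributes at most $|\Delta_X(s_i)|(1+o(1))\le J(1+o(1))$ to the ratio. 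The separation $\gamma_0$ guarantees that for $h<\gamma_0/2$ at most one $s_i$ lies within distance $\gamma_0/2$ of $[s,t]$; that one jump is bounded by $J$ via subadditivity, while the remaining finitely many jumps, being at distance $\ge\gamma_0/2$, each contribute $o(1)$ uniformly as $h\downarrow0$. Hence $\limsup_{h\downarrow0}\sup(\cdots)$ restricted to $M^{(\epsilon)}$ is at most $J$.

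It remains to prove the uniform modulus estimate $\limsup_{h\downarrow0}\sup_{|t-s|\le h}|M^{R}(t)-M^{R}(s)|/F(t,s)\le C\epsilon$, after which letting $\epsilon\downarrow0$ and combining with the previous paragraph completes the proof. I decompose $M^{R}(t)-M^{R}(s)=\int_{(s,t]}F(t,r)\,dR^{(\epsilon)}(r)+\int_{(0,s]}[F(t,r)-F(s,r)]\,dR^{(\epsilon)}(r)$ and split $R^{(\epsilon)}$ into its continuous martingale part, its drift, and its small-jump part. The aim is to show that the continuous and drift contributions are $o(F(t,s))$ uniformly while the small-jump contribution is $O(\epsilon)\,F(t,s)$ uniformly; for the continuous martingale part, for instance, the integral $\int_{(s,t]}F(t,r)\,dR^{(\epsilon),c}(r)$ has conditional variance at most $F(t,s)^2\langle R^{(\epsilon),c}\rangle(s,t]$, which is why division by $F(t,s)$ produces a vanishing order.

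\textbf{This uniform passage is the main obstacle.} Theorem \ref{thm1} only furnishes control for each fixed $s$, whereas here the ratio must be bounded simultaneously over the uncountable family of pairs $(s,t)$ while the denominator $F(t,s)$ degenerates to $0$ as $t\to s$. I expect to handle this by a maximal inequality (Doob and Burkholder--Davis--Gundy for the martingale components, total-variation control for the drift) applied on a dyadic grid of times, followed by a chaining argument to interpolate between grid points and recover the supremum over all pairs. The second-order smooth-variation conditions (c) and (d) of Definition \ref{smt.var2.0} are exactly what is needed to estimate the kernel-difference integral $\int_{(0,s]}[F(t,r)-F(s,r)]\,dR^{(\epsilon)}(r)$ over the whole past and to keep all error terms genuinely of smaller order than $F(t,s)$, so that only the jumps of $X$ survive in the limit.
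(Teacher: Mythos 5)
Your lower bound is fine and is indeed an immediate consequence of Theorem \ref{thm1}. The upper bound, however, is where all the content of the theorem lives, and there you have not produced a proof: you explicitly leave the uniform estimate for the small-jump/continuous remainder as ``the main obstacle,'' to be handled by Doob/BDG maximal inequalities plus chaining on a dyadic grid. That is a genuine gap, not a routine verification, and the route you propose is unlikely to close for a general semimartingale. Your own bound for the continuous martingale part gives conditional variance at most $F(t,s)^2\langle R^{(\epsilon),c}\rangle(s,t]$, so after dividing by $F(t,s)$ a chaining argument over a grid of mesh $h$ yields something of order $\sup_s\big(\langle R^{(\epsilon),c}\rangle(s,s+h]\big)^{1/2}\big(\log(1/h)\big)^{1/2}$; for a general continuous increasing bracket this supremum tends to $0$ with no rate at all (no H\"older regularity of $\langle R^{(\epsilon),c}\rangle$ is available from the hypotheses), so you cannot absorb the logarithmic factor, and the drift and compensator terms pose the same problem. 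Note that Theorem \ref{thm2} imposes no ``no Brownian component'' assumption, so the continuous martingale part genuinely has to be handled.

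The idea you are missing is the paper's very first step: Lemma \ref{IntParts} (integration by parts) rewrites $M(t)=-\int_0^t f(t,r)X(r)\,dr$ with $f=F^{(0,1)}$, after which the quantity to be estimated is a Lebesgue integral of the c\`adl\`ag path $X$ against a deterministic kernel and no stochastic estimate is ever needed. Uniformity in $(s,t)$ is then obtained pathwise: for fixed $\varepsilon$ one covers $[0,1]$ by finitely many random intervals $B_k^*$ centered at points $t_k$ (which include every jump of size $>2\varepsilon$) on which $X$ oscillates by at most $\varepsilon$ on each side of $t_k$, decomposes the normalized increment into six terms $D_1,\dots,D_6$ isolating the contribution $\Delta_X(t_k)\cdot\frac{1}{d}$, and bounds each term uniformly over $s\in B_k^*$ using only the deterministic integral estimates of Lemmas \ref{seq56tag2} and \ref{conabsf}. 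If you wish to keep your decomposition by jump size, you would still need an analogue of this pathwise compactness argument; the martingale machinery does not substitute for it.
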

Our next result, which in fact is a corollary of the previous theorem, improves the result of Marquardt from \cite{A4}.
\begin{theorem} \label{fracM}
Let $d\in(0,0.5)$. The sample paths of $\{M_d(t)\}_{t\geq 0}$, a fractional L\'{e}vy process, are $P$-a.s. H\"{o}lder continuous of order $d$ at any point $t\in\mathds{R}$.
\end{theorem}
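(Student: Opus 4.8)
The plan is to deduce Theorem \ref{fracM} from Theorem \ref{thm2} by splitting $M_d$ into a ``rough'' Volterra part that carries the $(t-r)^d$ singularity at the diagonal and a ``smooth'' part coming from the negative half-line. Fix $t\ge 0$. For $r<0$ we have $(t-r)^d_+=(t-r)^d$ and $(-r)^d_+=(-r)^d$; for $0\le r\le t$ we have $(-r)^d_+=0$; and for $r>t$ the kernel of Definition \ref{2} vanishes. Hence $M_d(t)=A(t)+B(t)$ with
\bd
A(t)=\int_0^t \frac{(t-r)^d}{\Gamma(d+1)}\,dL_r,\qquad B(t)=\int_{-\infty}^0 \frac{(t-r)^d-(-r)^d}{\Gamma(d+1)}\,dL_r .
\ed
Here $A$ is exactly a Volterra process $\int_0^t\widetilde F(t,r)\,dX(r)$ for the semimartingale $X=L|_{[0,\infty)}$ (with $X(0)=0$) and the kernel $\widetilde F(t,r)=(t-r)^d/\Gamma(d+1)$, which lies in $SR_d^2(0+)$ (it is the trivial example up to the constant $1/\Gamma(d+1)$, which cancels in conditions (a)--(d) of Definition \ref{smt.var2.0}), and $d\in(0,1)$.

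First I would handle $A$. Applying Theorem \ref{thm2} gives, $P$-a.s., that $\lim_{h\downarrow 0}\sup_{0<s<t<1,\,|t-s|\le h}|A(t)-A(s)|\,\Gamma(d+1)/(t-s)^d=\sup_{s\in[0,1]}|\Delta_L(s)|$. For a L\'evy process with $E[L(1)^2]<\infty$ the L\'evy measure integrates $x^2\wedge 1$, so on $[0,1]$ only finitely many jumps exceed any fixed level; hence $\sup_{s\in[0,1]}|\Delta_L(s)|<\infty$ $P$-a.s. Consequently there are $P$-a.s. a constant $C$ and an $h_0>0$ with $|A(t)-A(s)|\le C(t-s)^d$ whenever $0<s<t<1$ and $|t-s|\le h_0$; combined with the continuity and boundedness of $A$ on $[0,1]$, this makes $A$ $P$-a.s. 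H\"older continuous of order $d$ on all of $[0,1]$.

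Next I would show that $B$ is smoother on every interval bounded away from $0$. Using Marquardt's isometry $E[(\int f\,dL)^2]=E[L(1)^2]\int f^2$ from \cite{A4} together with the mean value inequality $|(t-r)^d-(s-r)^d|\le d\,(t-s)\,(\epsilon-r)^{d-1}$, valid for $0<\epsilon\le s<t$ and $r<0$, one obtains for $s,t\in[\epsilon,1]$ the bound
\bd
E\big[(B(t)-B(s))^2\big]\le \frac{d^2\,E[L(1)^2]}{\Gamma(d+1)^2}\,(t-s)^2\int_{-\infty}^0(\epsilon-r)^{2d-2}\,dr= C_\epsilon\,(t-s)^2,
\ed
where the last integral is finite precisely because $d<1/2$. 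By the Kolmogorov continuity criterion $B$ then has a modification that is H\"older continuous on $[\epsilon,1]$ of every order below $1/2$, in particular of order $d$ (again using $d<1/2$). As $\epsilon>0$ is arbitrary, $B$ is $P$-a.s. H\"older-$d$ at every point of $(0,1)$.

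Combining the two parts, $M_d=A+B$ is $P$-a.s. H\"older continuous of order $d$ at every interior point of $(0,1)$. To upgrade this single-interval statement to all $t\in\mathds{R}$ I would invoke the stationarity of the increments of $M_d$ (shown in \cite{A4}): for each $q\in\mathds{Q}$ the process $\{M_d(q+t)-M_d(q)\}_{t\ge0}$ has the same law as $\{M_d(t)\}_{t\ge0}$, so the argument above shows $M_d$ is $P$-a.s. H\"older-$d$ at every point of $(q,q+1)$; intersecting these countably many almost sure events and using $\bigcup_{q\in\mathds{Q}}(q,q+1)=\mathds{R}$ yields the theorem. The main obstacle is obtaining the exact exponent $d$ (rather than Marquardt's $\beta<d$) for both pieces. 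For $A$ this is exactly what the sharp limit of Theorem \ref{thm2} provides, once $\sup_s|\Delta_L(s)|<\infty$ is checked. For $B$ a naive variance estimate only gives a bound of order $|t-s|^{2d+1}$, hence H\"older order $<d$; the improvement to the Lipschitz-type bound of order $|t-s|^2$ valid on $[\epsilon,1]$, which pushes the Kolmogorov exponent past $d$, is what saves the day, at the cost of excluding the split point $0$ and thereby forcing the stationary-increment covering argument.
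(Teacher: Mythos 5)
Your proposal is correct in substance and, for the main term, follows the paper's own strategy exactly: you split $M_d$ into the Volterra piece $A(t)=\frac{1}{\Gamma(d+1)}\int_0^t(t-r)^d\,dL_r$ and the tail piece $B(t)$ over $(-\infty,0]$, and you treat $A$ precisely as the paper treats $M_d^1$, by applying Theorem \ref{thm2} to the kernel $(t-r)^d/\Gamma(d+1)$ and noting that $\sup_{s\in[0,1]}|\Delta_L(s)|<\infty$ almost surely. Where you genuinely diverge is the tail term. The paper (Lemma \ref{fracL}) works pathwise with the integrated-by-parts representation $\frac{1}{\Gamma(d)}\int_{-\infty}^0[(t-r)^{d-1}-(-r)^{d-1}]L(r)\,dr$ from \cite{A4} and controls the contribution near $-\infty$ via the iterated-logarithm growth bound for L\'evy processes (Proposition 48.9 of \cite{sato}), obtaining a pathwise Lipschitz bound $C|\dl|$ on the increments of $M_d^2$ for the given realization of $L$; you instead use the $L^2$-isometry of \cite{A4} together with a mean-value estimate to get $E[(B(t)-B(s))^2]\le C_\eps|t-s|^2$ on $[\eps,1]$ and then invoke Kolmogorov's criterion. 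Both routes work, and both spend the hypothesis $d<1/2$ in the same place (integrability of $(\eps-r)^{2d-2}$ on $(-\infty,0]$ for you, integrability of $(t+u)^{d-2}u^{1/2+\eps}$ for the paper). Your route is more elementary, but it leaves one small point implicit: Kolmogorov's criterion only produces a H\"older-continuous \emph{modification} of $B$, so to conclude something about the sample paths of $M_d$ you should add that $B=M_d-A$ is itself continuous (Marquardt's continuity of $M_d$ together with Theorem \ref{thm1} applied to $A$), hence indistinguishable from that modification; the paper's pathwise estimate avoids this issue altogether. Finally, your stationary-increments covering argument over intervals $(q,q+1)$, $q\in\mathds{Q}$, makes explicit the extension to all $t\in\mathds{R}$ that the paper only asserts can be ``easily adjusted.''
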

In Sections \ref{SecThm1},\ref{SecThm2} we prove Theorems \ref{thm1} , \ref{thm2}. In Section \ref{fracmsec} we prove Theorem \ref{fracM}.

\section{Proof of Theorem \ref{thm1}} \label{SecThm1}
The proof of Theorems \ref{thm1} and \ref{thm2} uses ideas of Takashima in \cite{A8}, but does not use the self-similarity assumed there. \\\\
The goal of this section is to prove Theorem \ref{thm1}. First we prove the integration by parts formula in Lemma \ref{IntParts}. Later, in Lemma \ref{Varchange}, we decompose the increment $M(t+h)-M(t)$ into two components and then we analyze the limiting behavior of each of the components. This allows us to prove Theorem \ref{thm1}. \\\\
In the following lemma we refer to functions in $\mathds{C}^{(1)}(E)$, which is the space of functions from Definition  \ref{CE}, without the condition that $f>0$ on $\tilde{E}$. It is easy to show that functions of smooth variation satisfy the assumptions of this lemma.
\begin{lemma} \label{IntParts}
Let $X$ be a semimartingale such that $X(0)=0$ a.s. Let $F(t,r)$ be a function in $\mathds{C}^{(1)}(E)$ satisfying $F(t,t)=0$ for all $t\in \mathds{R}$. Denote $f(t,r)\equiv F^{(0,1)}(t,r)$.
Then,
\bd
\int_{0}^{t} F(t,r)dX(r)=-\int_{0}^{t} f(t,r) X(r) dr, \ \ P-\rm{ a.s.}
\ed
\end{lemma}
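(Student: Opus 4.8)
The plan is to prove the integration by parts formula
\[
\int_{0}^{t} F(t,r)\,dX(r) = -\int_{0}^{t} f(t,r)\,X(r)\,dr
\]
by treating $t$ as a fixed parameter and integrating the process $X$ against the fixed function $r \mapsto F(t,r)$. The natural tool is the classical integration by parts formula for semimartingales. Since $t$ is fixed throughout, the map $r \mapsto F(t,r)$ is a deterministic function of bounded variation on $[0,t]$ (indeed it is $C^1$ on $(0,t]$ with derivative $f(t,r) = F^{(0,1)}(t,r)$, by the assumption $F \in \mathds{C}^{(1)}(E)$), so I can legitimately apply a Stieltjes-type integration by parts.

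First I would recall the product rule for the quadratic covariation of a semimartingale $X$ with a finite-variation (in fact absolutely continuous in $r$) deterministic integrand. Because $r \mapsto F(t,r)$ is continuous and of bounded variation, its quadratic covariation with $X$ vanishes, so the integration by parts formula simplifies to the Stieltjes form
\[
F(t,t)X(t) - F(t,0)X(0) = \int_{0}^{t} F(t,r)\,dX(r) + \int_{0}^{t} X(r)\,d_r F(t,r).
\]
Next I would use the two hypotheses to kill the boundary terms: $F(t,t)=0$ is assumed directly, and $X(0)=0$ a.s. by hypothesis, so the entire left-hand side vanishes. This rearranges to
\[
\int_{0}^{t} F(t,r)\,dX(r) = -\int_{0}^{t} X(r)\,d_r F(t,r).
\]
Finally, since $r \mapsto F(t,r)$ is $C^1$ on $(0,t)$ with $\partial F/\partial r = f(t,r)$, I would rewrite the Stieltjes differential $d_r F(t,r)$ as $f(t,r)\,dr$, which converts the second integral into the ordinary Lebesgue integral $\int_{0}^{t} f(t,r)X(r)\,dr$ and yields the claim.

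The main obstacle I expect is a technical regularity issue at the diagonal endpoint $r=t$ (and possibly $r=0$): the hypothesis only guarantees continuity of $F$ on the closed set $E$ and $C^1$ smoothness on the open set $\tilde E$, so the partial derivative $f(t,r)=F^{(0,1)}(t,r)$ may blow up or fail to be defined as $r \uparrow t$. For a genuine function of smooth variation like $F(t,r)=(t-r)^d$ with $d \in (0,1)$, the derivative $f(t,r) \sim d\,(t-r)^{d-1}$ is indeed singular at $r=t$, yet the integral $\int_0^t f(t,r)X(r)\,dr$ still converges absolutely because $X$ is bounded on compacts (being càdlàg) and $(t-r)^{d-1}$ is integrable near $r=t$. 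Hence the careful part of the argument is justifying the integration by parts near the singular endpoint: I would do this by an approximation/limiting argument, applying the clean integration by parts on $[0,t-\eps]$ and letting $\eps \downarrow 0$, using $F(t,t)=0$ together with the local boundedness of $X$ and the integrability of $f(t,\cdot)$ to control the boundary contribution at $t-\eps$ and to pass to the limit in both integrals by dominated convergence.
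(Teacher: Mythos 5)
Your proposal follows essentially the same route as the paper: apply the semimartingale integration-by-parts formula (Corollary 2 of Section 2.6 in Protter) to the deterministic finite-variation integrand $r\mapsto F(t,r)$, observe that the quadratic covariation $[X,F_t]_t$ vanishes because $F_t$ is continuous and of bounded variation, kill the boundary term via $F(t,t)=0$ (and $X(0)=0$), and rewrite $d_rF(t,r)$ as $f(t,r)\,dr$. Your extra truncation at $r=t-\eps$ to handle a possible blow-up of $f(t,\cdot)$ at the diagonal addresses a point the paper's one-line proof glosses over, and only strengthens the argument.
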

\begin{proof}
Denote $F_t(r)=F(t,r)$. By Corollary 2 in Section 2.6 of \cite{pro} we have
\be \label{chain32}
\int_{0}^{t} F_t(r-)dX(r)= X(t) F_t(t)- \int_{0}^{t} X(r-)dF_t(r)-[X,F_t]_t.
\ee
By the hypothesis we get $X(t) F_t(t)=0$. Since $F_t(\cdot)$ has continuous derivative and therefore is of bounded variation, it is easy to check that $[X,F_t]_t=0$, $P$-a.s.
\end{proof} \\\\
\textbf{Convention and Notation}\\
In this section we use the notation $F(t,r)$ for a smoothly varying function of index $d$ (that is, $F \in SR^2_d(0+)$), where $d$ is some number in $(0,1)$. We denote by $f(t,r) \equiv F^{(0,1)}(t,r)$, a \emph{smooth derivative of index} $d-1$ and let $SD^2_{d-1}(0+)$ denote the set of smooth derivative functions of index $d-1$. \\\\
In the following lemma we present the decomposition of the increments of the process $Y(t)$ that will be the key for the proof of Theorem \ref{thm1}.
\begin{lemma} \label{Varchange}
Let
\bd
Y(t)= \int_{0}^{t} f(t,r) X(r)dr, \qquad  t \geq 0.
\ed
Then we have
\bd
Y(t+\dl)-Y(t)= J_{1}(t,v,\dl)+J_{2}(t,v,\dl), \qquad  \forall t \geq 0,\ \ \dl>0,
\ed
where
\be \label{J1dec}
J_{1}(t,v,\dl) = \dl \int_{0}^{1} f(t+\dl,t+\dl-\dl v) X(t+\dl-\dl v)dv
\ee
and
\be \label{J2dec}
J_{2}(t,v,\dl) = \dl\int_{0}^{t/\dl} [f(t+\dl,t-\dl v)-f(t,t-\dl v)] X(t-\dl v)dv.
\ee
\end{lemma}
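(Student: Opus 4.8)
The plan is to start directly from the definition of $Y$ and to separate the two distinct sources of change in the increment $Y(t+\dl)-Y(t)$: the \emph{extension} of the domain of integration from $[0,t]$ to $[0,t+\dl]$, and the \emph{change} of the kernel from $f(t,\cdot)$ to $f(t+\dl,\cdot)$. Writing
\bd
Y(t+\dl)-Y(t)=\int_{0}^{t+\dl}f(t+\dl,r)X(r)\,dr-\int_{0}^{t}f(t,r)X(r)\,dr,
\ed
I would split the first integral at the point $r=t$. This produces a ``new mass'' term $\int_{t}^{t+\dl}f(t+\dl,r)X(r)\,dr$ over the freshly added interval $[t,t+\dl]$, together with $\int_{0}^{t}f(t+\dl,r)X(r)\,dr$ over the old interval. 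Combining the latter with the subtracted term collapses the kernel into its increment $f(t+\dl,r)-f(t,r)$, giving the clean decomposition
\bd
Y(t+\dl)-Y(t)=\int_{t}^{t+\dl}f(t+\dl,r)X(r)\,dr+\int_{0}^{t}[f(t+\dl,r)-f(t,r)]X(r)\,dr.
\ed

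The next step is to match each piece to its claimed form via a linear change of variables that rescales the relevant interval. For the first piece I would substitute $r=t+\dl-\dl v$, so that $dr=-\dl\,dv$ and the endpoints $r=t,\,t+\dl$ correspond to $v=1,\,0$; this turns the integral over $[t,t+\dl]$ into $\dl\int_{0}^{1}f(t+\dl,t+\dl-\dl v)X(t+\dl-\dl v)\,dv$, which is exactly $J_{1}(t,v,\dl)$ as defined in~(\ref{J1dec}). For the second piece I would substitute $r=t-\dl v$, so that $dr=-\dl\,dv$ and the endpoints $r=0,\,t$ correspond to $v=t/\dl,\,0$; this yields $\dl\int_{0}^{t/\dl}[f(t+\dl,t-\dl v)-f(t,t-\dl v)]X(t-\dl v)\,dv$, namely $J_{2}(t,v,\dl)$ from~(\ref{J2dec}). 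Adding the two reconstructed pieces gives the asserted identity.

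The argument is essentially a bookkeeping exercise in splitting a domain and rescaling, so there is no serious analytic obstacle; all the integrals are finite because $f=F^{(0,1)}$ is continuous on $\tilde{E}$ with the integrable near-diagonal growth $(t-r)^{d-1}$, $d-1\in(-1,0)$, while $X$ has c\`adl\`ag, hence locally bounded, paths. The only point demanding genuine care is the orientation of the two substitutions: each map $v\mapsto r$ is \emph{decreasing}, so the factor $-\dl$ coming from $dr$ must be combined with the reversal of the limits of integration in order to recover the positive prefactor $\dl$ appearing in both $J_{1}$ and $J_{2}$. I would therefore verify the endpoint correspondences explicitly, confirming that the freshly added interval $[t,t+\dl]$ supplies $J_{1}$ (the term carrying the near-diagonal behaviour that will later isolate the jump) while the kernel-increment over $[0,t]$ supplies $J_{2}$.
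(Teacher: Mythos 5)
Your proposal is correct and follows exactly the paper's own argument: split the integral at $r=t$, combine the kernels over $[0,t]$ into the increment $f(t+\dl,r)-f(t,r)$, and then apply the substitutions $r=t+\dl-\dl v$ and $r=t-\dl v$ (the paper compresses this last step into ``by making a change of variables we are done''). Your extra care about the orientation of the substitutions and the integrability near the diagonal is sound but not a departure from the paper's route.
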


\begin{proof}
For any $t  \in [0,1], \ \ \dl > 0$ we have
\begin{eqnarray} \label{chain11}
Y(t+\dl)-Y(t) &= &\int_{0}^{t+\dl} f(t+\dl,r) X(r)dr - \int_{0}^{t} f(t,r) X(r)dr \\
&=& \int_{t}^{t+\dl} f(t+\dl,r) X(r)dr + \int_{0}^{t}[f(t+\dl,r)-f(t,r)] X(r)dr. \nonumber
\end{eqnarray}\\
By making a change of variables we are done.
\end{proof}\\\\
The next propositions are crucial for analyzing the behavior of $J_1$ and $J_2$ from the above lemma. 
\begin{proposition} \label{gUnifConv}
Let $f(t,r) \in SD^2_{d-1}(0+) $ where $d\in(0,1)$. Let $X(r)$ be a semimartingale.
 Denote
\bd
g_{\dl}(t,v) = \frac{f(t+\dl,t-\dl v)-f(t,t-\dl v)}{f(t+\dl,t)}, \ \ t \in[0,1], \ \ v \geq 0,  \ \ \dl>0.
\ed
Then
\bd
\lim_{\dl\downarrow 0}\bigg|  \int_{0}^{t/ \dl} g_{\dl}(t,v)X(t-\dl v)dv + \frac{1}{d} X(t-)\bigg|=0, \ \ \forall t \in [0,1], \ \ P-\rm{a.s.}
\ed
\end{proposition}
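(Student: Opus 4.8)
The plan is to pass to the limit $\dl\downarrow 0$ inside the integral by dominated convergence, after replacing $X(t-\dl v)$ by $X(t-)$. The case $t=0$ is trivial, since the integral is empty and $X(0-)=0$ by convention, so assume $t\in(0,1]$. The guiding computation is the model case $f(t,r)=-d(t-r)^{d-1}$: a direct substitution gives $g_\dl(t,v)=(1+v)^{d-1}-v^{d-1}=:g(v)$, independent of $t$ and $\dl$, and integrating the two terms separately while letting the upper limit tend to infinity (legitimate because $d<1$) yields $\int_0^\infty g(v)\,dv=-1/d$. Thus the target constant $-\tfrac1d$ is exactly $\int_0^\infty g(v)\,dv$, and everything reduces to justifying the interchange of limit and integral in the general smoothly varying case.

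First I would prove the pointwise limit $g_\dl(t,v)\to g(v)$ for each fixed $v>0$, uniformly in $t\in[0,1]$. I write
\bd
g_\dl(t,v)=\frac{f(t+\dl,t-\dl v)}{f(t+\dl,t)}-\frac{f(t,t-\dl v)}{f(t,t-\dl)}\cdot\frac{f(t,t-\dl)}{f(t+\dl,t)}
\ed
and treat the three ratios separately. The first ratio and $f(t,t-\dl v)/f(t,t-\dl)$ compare $f$ at gaps $\dl(1+v)$ and $\dl v$ against the reference gap $\dl$; since $h\mapsto f(s,s-h)$ is regularly varying of index $d-1$ at $0+$, uniformly in the base point $s$ (a consequence of conditions (a),(c),(d) via the uniform convergence theorem for regularly varying functions), these converge to $(1+v)^{d-1}$ and $v^{d-1}$. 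The remaining factor $f(t,t-\dl)/f(t+\dl,t)$ compares $f$ at two nearby points sharing the same gap $\dl$; moving along the gap-preserving path $u\mapsto(t+u,t-\dl+u)$, $u\in[0,\dl]$, its logarithmic derivative is $(f^{(1,0)}+f^{(0,1)})/f$, and the two leading $1/\dl$ singularities, namely $(d-1)/\mathrm{gap}$ from (c)+(a) and $(1-d)/\mathrm{gap}$ from (d)+(a), cancel; hence this factor tends to $1$. Combining the three limits gives $g_\dl(t,v)\to(1+v)^{d-1}-v^{d-1}=g(v)$.

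Next I would produce a $\dl$-independent integrable majorant. Potter-type bounds for the regularly varying map $h\mapsto f(s,s-h)$ supply, for a small fixed $\eta>0$ and all $\dl$ small enough, $|f(s,s-\dl w)/f(s,s-\dl)|\le C\max(w^{d-1-\eta},w^{d-1+\eta})$ uniformly in $s$ and $w$. Combining these with the mean value theorem in the first variable, $f(t+\dl,t-\dl v)-f(t,t-\dl v)=\dl\,f^{(1,0)}(\xi,t-\dl v)$ with $\xi\in(t,t+\dl)$, and the estimate $f^{(1,0)}/f\sim(d-1)/\mathrm{gap}$ at a gap of order $\dl v$, I would obtain a bound $|g_\dl(t,v)|\le G(v)$ with $G(v)\asymp v^{d-1}$ as $v\downarrow0$ and $G(v)\asymp v^{d-2}$ as $v\to\infty$. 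Both tails are integrable, since $d-1>-1$ and $d-2<-1$, so $G\in L^1(0,\infty)$.

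Finally, fix a sample path on the full-measure event that $X(\cdot)$ is c\`adl\`ag, hence bounded on $[0,1]$ with $C:=\sup_{[0,1]}|X|<\infty$. For each fixed $v>0$ we have $t-\dl v\uparrow t$ as $\dl\downarrow0$, so $X(t-\dl v)\to X(t-)$; thus $g_\dl(t,v)X(t-\dl v)\mathds{1}_{\{v<t/\dl\}}\to g(v)X(t-)$ pointwise in $v$ and is dominated by $C\,G(v)\in L^1(0,\infty)$. Dominated convergence then gives $\int_0^{t/\dl}g_\dl(t,v)X(t-\dl v)\,dv\to X(t-)\int_0^\infty g(v)\,dv=-\tfrac1d X(t-)$, which is the claim. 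I expect the main obstacle to lie in the second and third steps: extracting genuinely uniform (in $t$ and $\dl$) convergence and Potter bounds from the pointwise conditions (a)--(d), and especially verifying the delicate cancellation of the two $1/\dl$ singularities that forces the gap-preserving factor to tend to $1$ rather than to a nontrivial constant.
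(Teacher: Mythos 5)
Your proof is correct in outline but follows a genuinely different route from the paper. The paper never identifies the pointwise limit of $g_{\delta}$; instead it writes $\int_0^{t/\delta}g_{\delta}(t,v)X(t-\delta v)\,dv+\tfrac1d X(t-)$ as $\int_0^{t/\delta}g_{\delta}(t,v)[X(t-\delta v)-X(t-)]\,dv+X(t-)\bigl(\int_0^{t/\delta}g_{\delta}(t,v)\,dv+\tfrac1d\bigr)$, kills the second term by the uniform integral asymptotic $\int_0^{t/\delta}g_{\delta}\,dv\to-\tfrac1d$ (Lemma \ref{seq56tag2}(c)), and splits the first at $v=h_0/\delta$: on $(0,h_0/\delta]$ the increment $|X(t-\delta v)-X(t-)|<\eps$ by left limits and $\int_0^{h_0/\delta}|g_{\delta}|\,dv\to\tfrac1d$ (Lemma \ref{seq56tag2}(d)), while on $[h_0/\delta,t/\delta]$ one uses $\|X\|_\infty$ and $\int_{h_0/\delta}^{t/\delta}|g_{\delta}|\,dv\to0$ (Lemma \ref{seq56tag2}(a)). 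You instead identify the limit kernel $g(v)=(1+v)^{d-1}-v^{d-1}$ with $\int_0^\infty g=-\tfrac1d$ and run dominated convergence with a Potter-type majorant; your computation of the limit, the cancellation of the two $1/\delta$ singularities in the gap-preserving factor, and the value of $\int_0^\infty g$ are all right. What the paper's route buys is that it only needs the four \emph{integral} asymptotics of Lemma \ref{seq56tag2} --- which are uniform in $t$ and are reused verbatim in the proof of Theorem \ref{thm2} --- whereas your route needs a genuine pointwise majorant $G(v)$ valid on the whole expanding range $v\in(0,t/\delta)$; the thinnest point of your argument is exactly there, since conditions (a)--(d) of Definition \ref{smt.var2.0} only control $f$ as the gap $\delta v$ tends to $0$, so for $\delta v$ bounded away from the diagonal you must switch to compactness of $f$ and $f^{(1,0)}$ on subsets of $\tilde E$ together with a lower Potter bound on $|f(t+\delta,t)|$ to get $|g_{\delta}(t,v)|\le C v^{d-2+\eta}$ there. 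This is fixable and is the same regular-variation bookkeeping the paper itself sweeps into the unproved Lemma \ref{seq56tag2}, so the two proofs defer comparable amounts of work; your version is more transparent about where $-\tfrac1d$ comes from, while the paper's is better adapted to the uniform-in-$t$ statements needed later.
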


\begin{proposition} \label{con1} Let $f(t,r) \in SD^2_{d-1}(0+) $ where $d\in(0,1)$. Let $X(r)$ be a semimartingale.
Denote
\bd
f_{\dl}(t,v)=\frac{f(t+\dl,t+\dl-\dl v)}{f(t+\dl,t)} , \ \ t \in[0,1], \ \ v \geq 0,  \ \ \dl>0.
\ed
Then
\begin{displaymath}
\lim_{\dl\downarrow 0} \bigg| \int_{0}^{1} f_{\dl}(t,v) X(t+\dl(1-v))dv-\frac{1}{d}X(t)\bigg| =0, \ \ \forall t \in [0,1], \ \ P-\rm{a.s.}
\end{displaymath}
\end{proposition}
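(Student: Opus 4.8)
The plan is to show that the kernel ratio $f_{\dl}(t,v)$ converges to the pure power $v^{d-1}$ as $\dl\downarrow 0$, and then to pass this limit through the integral against $X(t+\dl(1-v))$ by dominated convergence, paralleling the treatment of the $J_2$ term in Proposition \ref{gUnifConv}. The target constant is transparent from the model case $F(t,r)=(t-r)^d$, where $f(t,r)=-d(t-r)^{d-1}$ gives $f_{\dl}(t,v)=v^{d-1}$ exactly; since $\int_0^1 v^{d-1}\,dv=1/d$ and $X(t+\dl(1-v))\to X(t)$ by right-continuity, the limit must be $\frac1d X(t)$.

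First I would fix $t\in[0,1]$ and, writing $s=t+\dl$ and $\phi_s(u)=f(s,s-u)$, observe that $f_{\dl}(t,v)=\phi_s(\dl v)/\phi_s(\dl)$. Since $f=F^{(0,1)}$ with $F\in SR^2_d(0+)$, conditions (a) and (d) of Definition \ref{smt.var2.0} combine to give, uniformly for $s$ in a compact neighborhood of $t$,
\bd
\frac{u\,f^{(0,1)}(s,s-u)}{f(s,s-u)}=\frac{u^2 F^{(0,2)}(s,s-u)/F(s,s-u)}{u\,F^{(0,1)}(s,s-u)/F(s,s-u)}\longrightarrow \frac{d(d-1)}{-d}=1-d,
\ed
as $u\downarrow0$. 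Equivalently, the logarithmic derivative of $\phi_s$ satisfies $u\,\phi_s'(u)/\phi_s(u)\to d-1$ uniformly in $s$; this is precisely the smooth-variation property of the smooth derivative $f\in SD^2_{d-1}(0+)$, and it is the only structural input needed.

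Next I would integrate this logarithmic derivative to obtain a Karamata-type representation: for $v\in(0,1]$,
\bd
\log\frac{|\phi_s(\dl v)|}{|\phi_s(\dl)|}=\int_{\dl}^{\dl v}\frac{\phi_s'(u)}{\phi_s(u)}\,du=(d-1)\log v+\int_{\dl}^{\dl v}\frac{\eps_s(u)}{u}\,du,
\ed
where $\eps_s(u)\to0$ uniformly in $s$ as $u\downarrow0$. For fixed $v$ the error term is bounded by $|\log v|\sup_{u\le\dl}|\eps_s(u)|\to0$, which yields the pointwise convergence $f_{\dl}(t,v)\to v^{d-1}$. The same representation produces a uniform Potter bound: for any $\eta\in(0,d)$ there is $\dl_0>0$ such that $f_{\dl}(t,v)\le v^{d-1-\eta}$ for all $v\in(0,1]$ and $\dl<\dl_0$, and since $d-1-\eta>-1$ this dominating function is integrable on $[0,1]$.

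Finally I would combine these facts with the regularity of $X$. As a cadlag semimartingale $X$ has paths bounded on the compact interval $[t,t+\dl_0]$, so $|X(t+\dl(1-v))|\le\sup_{[t,t+\dl_0]}|X|<\infty$ $P$-a.s., while right-continuity gives $X(t+\dl(1-v))\to X(t)$ for every $v\in[0,1)$. Hence the integrand converges pointwise to $v^{d-1}X(t)$ and is dominated by an integrable function, so dominated convergence yields $\int_0^1 f_{\dl}(t,v)X(t+\dl(1-v))\,dv\to X(t)\int_0^1 v^{d-1}\,dv=\frac1d X(t)$, as required. The hard part is the third step: making the error term in the Karamata representation uniform both in $s=t+\dl$ (which drifts as $\dl\downarrow0$) and in $v$ near $0$, so as to secure the integrable Potter bound; once that bound is established, the remaining passage to the limit is routine.
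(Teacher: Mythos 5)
Your proof is correct, but it takes a genuinely different route from the paper's. The paper adds and subtracts $X(t)$ inside the integral, writing the error as
$\int_0^1 f_{\dl}(t,v)[X(t+\dl(1-v))-X(t)]\,dv + X(t)\bigl(\int_0^1 f_{\dl}(t,v)\,dv-\tfrac1d\bigr)$;
the second term is handled by Lemma~\ref{conabsf}(b) (uniform convergence of $\int_0^1 f_{\dl}(t,v)\,dv$ to $1/d$) together with $\|X\|_\infty<\infty$, and the first by right-continuity of $X$ at $t$ together with Lemma~\ref{conabsf}(a) (uniform convergence of $\int_0^1 |f_{\dl}(t,v)|\,dv$ to $1/d$), exactly as in the proof of Proposition~\ref{gUnifConv}. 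That argument needs only the convergence of these two integrals, not of the kernel itself, and the paper defers those facts to Lemma~\ref{conabsf}, whose proof is omitted as a straightforward consequence of smooth variation. You instead prove the pointwise convergence $f_{\dl}(t,v)\to v^{d-1}$ via a Karamata representation of the logarithmic derivative and secure an integrable Potter majorant $v^{d-1-\eta}$, then conclude by dominated convergence. Your route is more self-contained: it makes explicit precisely the uniform-in-$s$ estimate that any proof of Lemma~\ref{conabsf} would also require, at the cost of proving slightly more than is needed. The uniformity you flag as the delicate point is indeed available, since conditions (a) and (d) of Definition~\ref{smt.var2.0} hold uniformly over compact sets and $s=t+\dl$ stays in $[0,2]$; two small points to tidy are that the Potter bound should be stated for $|f_{\dl}(t,v)|$ (the ratio is in fact eventually positive, since condition (a) and the positivity of $F$ force $f(t,t-h)<0$ for small $h$), and that the Karamata integration requires $\phi_s$ to be nonvanishing and $C^1$ on $(0,\dl]$, which holds for the same reason.
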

We first give a proof of Theorem \ref{thm1} based on the above propositions and then get back to the proofs of the propositions.
\paragraph {Proof of Theorem \ref{thm1}:}
From Lemma \ref{IntParts} we have
\be \label{chain1}
M(t+\dl)-M(t) = -(Y(t+\dl)-Y(t)), \ \ P-\rm{a.s.}
\ee
where
\bd
Y(t)= \int_{0}^{t} f(t,r) X(r)dr.
\ed
By Lemma \ref{Varchange}, for every $t \geq 0, \ \ \dl > 0$, we have
\be \label{Ydecomp}
Y(t+\dl)-Y(t)= J_{1}(t,v,\dl)+J_{2}(t,v,\dl),
\ee
For the first integral we get
\begin{displaymath}
\frac{J_{1}(t,v,\dl)}{\dl f(t+\dl,t)} =  \int_{0}^{1} f_{\dl}(t,v) X(t+\dl(1-v))dv.
\end{displaymath} \\
Now we apply Proposition \ref{con1} to get
\be \label{ptr11}
\lim_{\dl\downarrow 0} \frac{J_{1}(t,v,\dl)}{\dl f(t+\dl,t)} =  \frac{1}{d}X(t), \ \ \forall t \in [0,1], \ \ P-\rm{a.s.}
\ee
For the second integral we get
\begin{equation}
\frac{J_{2}(t,v,\dl)}{\dl f(t+\dl,t)}= \int_{0}^{t/\dl} g_{\dl}(t,v) X(t-\dl v)dv.
\end{equation} \\
By Proposition \ref{gUnifConv} we get
\be \label{ptr22}
\lim_{\dl\downarrow 0} \frac{J_{2}(t,v,\dl)}{\dl f(t+\dl,t)}=-\frac{1}{d}X(t-), \ \ \forall t \in [0,1], \ \ P-\rm{a.s.}
\ee\\
Combining (\ref{ptr11}) and (\ref{ptr22}) with (\ref{Ydecomp}) we get
\be \label{Meqn}
\lim_{\dl\downarrow 0}\frac{Y(t+\dl)-Y(t)}{\dl f(t+\dl,t)}= \frac{1}{d}\Delta_{X}(t), \ \ \forall t \in [0,1], \ \ P-\rm{a.s.}
\ee
Recall that $F^{(0,1)}(t,r)=f(t,r)$ where by our assumptions $F \in SR_d^2(0+)$. It is trivial to verify that
\be \label{limsmooth0}
 \lim_{h\downarrow 0}\sup_{ t \in [0,1]}\bigg| \frac{F(t,t-h)}{h f(t,t-h)}+\frac{1}{d}\bigg|=0.
\ee
Then by (\ref{chain1}), (\ref{Meqn}) and (\ref{limsmooth0}) we get
\be
\lim_{\dl\downarrow 0}  \frac{M(t+\dl)-M(t)}{F(t+\dl,t)}=\Delta_{X}(t) , \ \ \forall t \in [0,1], \ \ P-\rm{a.s.}
\ee
\qed \\\\
Now we are going to prove Propositions \ref{gUnifConv} and \ref{con1}.
First let us state two lemmas which are dealing with the properties of functions $f_{\dl}$ and $g_{\dl}$.
We omit the proofs as they are pretty much straightforward consequences of the properties of smoothly varying functions.

\begin{lemma} \label{seq56tag2}
Let $f(t,r) \in SD^2_{d-1}(0+) $ where $d\in(0,1)$. Let $g_{\dl}(t,v)$ be defined as in  Proposition~\ref{gUnifConv}.
Then for every $h_0\in (0,1]$
\begin{itemize}
\item[{\bf (a)}]
\bd
 \lim_{\dl\downarrow 0}\sup_{h_0 \leq t \leq 1}\int_{h_0/\dl}^{t/ \dl} |g_{\dl}(t,v)|dv=0;
\ed
\item[{\bf (b)}]
\bd
\lim_{\dl\downarrow 0}\sup_{0 \leq t \leq 1} \bigg| \int_{0}^{h_0/ \dl} g_{\dl}(t,v)dv + \frac{1}{d}\bigg|=0;
\ed
\item[{\bf (c)}]
\bd
\lim_{\dl\downarrow 0}\sup_{h_0 \leq t \leq 1} \bigg| \int_{0}^{t/ \dl} g_{\dl}(t,v)dv + \frac{1}{d}\bigg|=0;
\ed
\item[{\bf (d)}]
\bd
\lim_{\dl\downarrow 0}\sup_{h_0 \leq t \leq 1}\bigg|\int_{0}^{h_0/ \dl} |g_{\dl}(t,v)|dv-\frac{1}{d}\bigg| =0.
\ed
\end{itemize}
\end{lemma}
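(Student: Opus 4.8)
The plan is to use crucially that $f=F^{(0,1)}$, so that after the change of variables $r=t-\dl v$ any \emph{signed} integral of $g_\dl$ collapses to boundary values of $F$: for $0\le a<b$,
\[
\int_a^b g_\dl(t,v)\,dv=\frac{1}{\dl f(t+\dl,t)}\int_{t-\dl b}^{t-\dl a}\big[f(t+\dl,r)-f(t,r)\big]\,dr ,
\]
and since $\partial_r F=f$ the inner integral is a difference of four values of $F$. Everything then rests on two uniform facts supplied by smooth variation: relation~(\ref{limsmooth0}), i.e.\ $F(t+\dl,t)/\big(\dl f(t+\dl,t)\big)\to-1/d$ uniformly on $[0,1]$; and the fact that $h\mapsto F(t,t-h)$ is regularly varying of index $d\in(0,1)$, so $F(t+\dl,t)/\dl\to\infty$ and hence $|f(t+\dl,t)|\to\infty$ uniformly in $t$, while $F$ and $F^{(1,0)}$ remain bounded on any region at a positive distance from the diagonal.

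For \textbf{(b)} I take $a=0$, $b=h_0/\dl$; using $F(t,t)=0$ the identity becomes
\[
\int_0^{h_0/\dl} g_\dl(t,v)\,dv=\frac{F(t+\dl,t)}{\dl f(t+\dl,t)}+\frac{F(t,t-h_0)-F(t+\dl,t-h_0)}{\dl f(t+\dl,t)} .
\]
By~(\ref{limsmooth0}) the first term tends to $-1/d$ uniformly. In the second term the mean value theorem gives $F(t+\dl,t-h_0)-F(t,t-h_0)=\dl F^{(1,0)}(\xi,t-h_0)$ for some $\xi\in(t,t+\dl)$, so it equals $-F^{(1,0)}(\xi,t-h_0)/f(t+\dl,t)$, with bounded numerator (distance $\ge h_0$ from the diagonal) and denominator blowing up; hence it vanishes uniformly, proving (b). Part \textbf{(c)} is the same computation with $b=t/\dl$, whose lower endpoint is $r=0$: one gets $F(t+\dl,t)/(\dl f(t+\dl,t))+\big(F(t,0)-F(t+\dl,0)\big)/(\dl f(t+\dl,t))$, and for $t\ge h_0$ the point $r=0$ again lies at distance $\ge h_0$ from the diagonal, so the error term is controlled exactly as before.

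Part \textbf{(a)} concerns a range strictly away from the diagonal, where no cancellation is needed: for $v\in[h_0/\dl,t/\dl]$ we have $r=t-\dl v\in[0,t-h_0]$, and the mean value theorem gives $|g_\dl(t,v)|=\dl\,|f^{(1,0)}(\eta,r)|/|f(t+\dl,t)|$ with $\eta\in(t,t+\dl)$. Since $|f^{(1,0)}|\le M$ on the compact set at distance $\ge h_0$ from the diagonal and the $v$-interval has length $\le1/\dl$, we obtain $\int_{h_0/\dl}^{t/\dl}|g_\dl(t,v)|\,dv\le M/|f(t+\dl,t)|\to0$ uniformly in $t\in[h_0,1]$.

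The genuine difficulty is \textbf{(d)}, where the absolute value prevents telescoping across the whole range, and this is the step I expect to be the main obstacle. Condition~(c) of Definition~\ref{smt.var2.0} gives $h^2f^{(1,0)}(t,t-h)/F(t,t-h)\to d(1-d)>0$, and condition~(a) gives $f<0$, near the diagonal; hence there is a fixed $h_2\in(0,h_0]$ such that $g_\dl\le0$ whenever the relevant arguments lie within distance $h_2$ of the diagonal. I split $[0,h_0/\dl]=[0,h_2/(2\dl)]\cup[h_2/(2\dl),h_0/\dl]$. On the first piece $\dl(1+v)\le h_2$ for small $\dl$, so $|g_\dl|=-g_\dl$ and the telescoping computation of (b), with $h_2/2$ in place of $h_0$, gives $\int_0^{h_2/(2\dl)}|g_\dl|=-\int_0^{h_2/(2\dl)}g_\dl\to1/d$. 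The second piece lies at distance $\ge h_2/2$ from the diagonal and is $\le M'/|f(t+\dl,t)|\to0$ exactly as in (a). Adding the two gives (d). Throughout, the points that must be checked — and which are the promised straightforward consequences of smooth variation — are that~(\ref{limsmooth0}), the divergence $|f(t+\dl,t)|\to\infty$, and the near-diagonal sign of $f^{(1,0)}$ all hold \emph{uniformly} in $t$; these come from the uniform-in-$t$ formulation of Definition~\ref{smt.var2.0} together with the representation theorem for smoothly varying functions in~\cite{reg}.
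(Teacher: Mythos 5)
The paper offers no proof of Lemma~\ref{seq56tag2} to compare against --- the authors explicitly omit it as a ``straightforward consequence of the properties of smoothly varying functions'' --- so the only question is whether your argument is correct, and it is; it supplies exactly the details the paper suppresses. The telescoping identity $\int_a^b g_{\delta}(t,v)\,dv=\bigl[F(t+\delta,t-\delta a)-F(t+\delta,t-\delta b)-F(t,t-\delta a)+F(t,t-\delta b)\bigr]/\bigl(\delta f(t+\delta,t)\bigr)$ reduces (b) and (c) to relation~(\ref{limsmooth0}) plus a boundary term at distance at least $h_0$ from the diagonal, which dies because $|f(t+\delta,t)|\to\infty$ uniformly; (a) is the same off-diagonal mean-value bound; and you correctly identify (d) as the only genuinely delicate part, where the absolute value is removed by showing $g_{\delta}\le 0$ near the diagonal via condition (c) of Definition~\ref{smt.var2.0} (which forces $F^{(1,1)}(s,s-h)>0$ for small $h$, uniformly in $s$) together with $f(t+\delta,t)<0$ from condition (a). Two small points deserve a sentence each if this were written out in full: first, the uniform divergence $|f(t+\delta,t)|\to\infty$ need not be outsourced to the representation theorem of \cite{reg} --- integrating $\partial_h\log F(t,t-h)=(d+o(1))/h$ from $\delta$ up to a fixed $h_1$ and using $\inf_{t}F(t,t-h_1)>0$ gives $F(t+\delta,t)\ge c\,\delta^{d+\eps}$ uniformly, hence $F(t+\delta,t)/\delta\to\infty$ since $d<1$, and condition (a) converts this into the divergence of $|f(t+\delta,t)|$; second, the integrand $g_{\delta}(t,\cdot)$ has an integrable singularity at $v=0$ (since $f(t,t-\delta v)\sim -d\,(\delta v)^{d-1}$), so the telescoping across the diagonal endpoint should be justified by the continuity of $F$ up to the diagonal together with $F(t,t)=0$; neither point affects the validity of your argument.
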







\begin{lemma} \label{conabsf} Let $f(t,r) \in SD^2_{d-1}(0+) $ where $d\in(0,1)$. Let $f_{\dl}(t,v)$ be
defined as in Proposition \ref{con1}.
Then,
\begin{itemize}
\item[{\bf (a)}]
\begin{displaymath}
\lim_{\dl\downarrow 0}\sup_{0 \leq t \leq 1} \bigg| \int_{0}^{1}|f_{\dl}(t,v) |dv - \frac{1}{d} \bigg|=0;
\end{displaymath}
 \item[{\bf (b)}]
\begin{displaymath}
\lim_{\dl\downarrow 0}\sup_{0 \leq t \leq 1} \bigg| \int_{0}^{1} f_{\dl}(t,v)dv-\frac{1}{d}\bigg| =0.
\end{displaymath}
\end{itemize}
\end{lemma}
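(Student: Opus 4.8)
The plan is to show that the rescaled kernel $f_\delta(t,v)$ converges to the pure power $v^{d-1}$ with enough uniformity in $t$ to pass the limit through the integral. Writing $s=t+\delta$, we have $f_\delta(t,v)=f(s,s-\delta v)/f(s,s-\delta)$, so for fixed $t,\delta$ this is a ratio of the values of $f$ in the diagonal direction at gaps $\delta v$ and $\delta$. Since $f\in SD^2_{d-1}(0+)$, the function $f(s,s-h)$ has constant sign and does not vanish for small $h>0$ (indeed $f=F^{(0,1)}\le 0$ near the diagonal, because $F>0$ on $\tilde{E}$ while $F(s,s)=0$), so logarithms are legitimate. First I would record the uniform log-derivative estimate
\[
\lim_{h\downarrow 0}\sup_{s\in K}\Big|\frac{h f^{(0,1)}(s,s-h)}{f(s,s-h)}-(1-d)\Big|=0,
\]
which follows from Definition \ref{smt.var2.0}: dividing property (d) by property (a) and using $f^{(0,1)}=F^{(0,2)}$, $f=F^{(0,1)}$ gives exactly this limit, with $K=[0,2]$ covering every relevant base point $s=t+\delta$, $t\in[0,1]$, $\delta\le 1$.

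The heart of the argument is the representation
\[
\log f_\delta(t,v)=\int_{\delta v}^{\delta}\frac{f^{(0,1)}(s,s-h)}{f(s,s-h)}\,dh=(d-1)\log v+R_\delta(t,v),
\]
obtained by differentiating $\log|f(s,s-h)|$ in $h$ and substituting the estimate above (with absolute values understood). Setting $\eta(s,h)=hf^{(0,1)}(s,s-h)/f(s,s-h)-(1-d)$ and $\epsilon(\delta)=\sup_{s\in[0,2],\,0<h\le\delta}|\eta(s,h)|$, the remainder obeys $|R_\delta(t,v)|\le \epsilon(\delta)\log(1/v)$ for all $v\in(0,1]$ and all $t$, and $\epsilon(\delta)\to 0$ as $\delta\downarrow0$ by the smooth variation hypothesis. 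Exponentiating yields
\[
f_\delta(t,v)=v^{d-1}\exp\big(R_\delta(t,v)\big),\qquad v^{d-1+\epsilon(\delta)}\le f_\delta(t,v)\le v^{d-1-\epsilon(\delta)},
\]
so for $\delta$ small enough that $\epsilon(\delta)<d$ the family $\{f_\delta(t,\cdot)\}_t$ is positive and dominated by the fixed integrable envelope $v^{d-1-\epsilon(\delta)}$ on $[0,1]$, uniformly in $t$, while also converging pointwise to $v^{d-1}$.

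Finally I would conclude with a dominated-convergence estimate made uniform in $t$. Using $|e^x-1|\le|x|e^{|x|}$,
\[
\Big|\int_0^1 f_\delta(t,v)\,dv-\frac1d\Big|\le\int_0^1 v^{d-1}\big|e^{R_\delta(t,v)}-1\big|\,dv\le \epsilon(\delta)\int_0^1 v^{d-1-\epsilon(\delta)}\log(1/v)\,dv,
\]
and the last integral equals $(d-\epsilon(\delta))^{-2}$, which stays bounded once $\epsilon(\delta)<d/2$. Hence the right-hand side tends to $0$ uniformly in $t\in[0,1]$, which is exactly part (b); and since $f_\delta$ is positive for small $\delta$, one has $|f_\delta|=f_\delta$, so part (a) is identical. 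The main obstacle is precisely the singularity at $v=0$: because $v^{d-1}\to\infty$, one cannot simply bound $f_\delta-v^{d-1}$ pointwise and integrate, and a naive uniform bound breaks down. The logarithmic representation is what resolves this, converting the multiplicative error into the additive $R_\delta$ and producing the self-improving envelope $v^{d-1\mp\epsilon(\delta)}$ that is integrable \emph{uniformly} in $t$ for all small $\delta$; controlling $\epsilon(\delta)$ uniformly over the compact set of base points $s$ is where the smooth (rather than merely regular) variation assumption is essential.
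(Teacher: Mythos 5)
Your proof is correct, and it in fact supplies details that the paper deliberately omits: the authors state only that Lemmas \ref{seq56tag2} and \ref{conabsf} are ``pretty much straightforward consequences of the properties of smoothly varying functions'' and give no argument. Your route --- passing to $\log f_{\dl}(t,v)=\int_{\dl v}^{\dl} f^{(0,1)}(s,s-h)/f(s,s-h)\,dh$ with $s=t+\dl$, extracting the leading term $(d-1)\log v$ from the uniform limit $hf^{(0,1)}(s,s-h)/f(s,s-h)\to 1-d$ (obtained by dividing condition (d) by condition (a) of Definition \ref{smt.var2.0}), and bounding the remainder by $\epsilon(\dl)\log(1/v)$ to get the two-sided envelope $v^{d-1\pm\epsilon(\dl)}$ --- is exactly the Karamata-type representation one expects here, and the final estimate $\epsilon(\dl)\int_0^1 v^{d-1-\epsilon(\dl)}\log(1/v)\,dv=\epsilon(\dl)(d-\epsilon(\dl))^{-2}\to 0$ correctly delivers the uniformity in $t$ and handles the nonintegrable-looking singularity at $v=0$. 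One small point to tighten: your parenthetical justification that $f\le 0$ near the diagonal (``because $F>0$ on $\tilde E$ while $F(s,s)=0$'') is only a heuristic --- monotone decay of $F(s,\cdot)$ to zero at the endpoint does not by itself force $F^{(0,1)}\le 0$ at every nearby point; the clean argument is that condition (a) of Definition \ref{smt.var2.0} gives $hf(s,s-h)/F(s,s-h)\to -d<0$ uniformly on compacts with $F>0$, so $f(s,s-h)<0$ for all sufficiently small $h$ uniformly in $s$, which is what legitimizes the logarithm and gives $|f_{\dl}|=f_{\dl}$, reducing (a) to (b). With that fix the argument is complete.
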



Now we will use Lemmas~\ref{seq56tag2}, \ref{conabsf} to
prove Propositions~\ref{gUnifConv}, \ref{con1}. At this point we also need to introduce the notation for the supremum norm on c\`{a}dl\`{a}g functions on $[0,1]$:
\bd
\|f\|_{\infty} = \sup_{0\leq t \leq 1} |f(t)|, \ \ f \in D_R[0,1],
\ed
where $D_R[0,1]$ is the class of real valued c\`{a}dl\`{a}g functions on $[0,1]$.
Since $X$ is a c\`{a}dl\`{a}g process we have
\be \label{xsup}
\|X\|_{\infty} < \infty, \ \ P-\rm{a.s.}
\ee
Note that for every $I\subset \mathds{R}$, $D_R(I)$ will denotes the class of real-valued c\`{a}dl\`{a}g functions on $I$.

\paragraph{Proof of Proposition \ref{gUnifConv}:}
Let us consider the following decomposition
\begin{eqnarray} \label{gendec}
\int_{0}^{t/ \dl} g_{\dl}(t,v)X(t-\dl v)dv + \frac{1}{d} X(t-) &=&
\int_{0}^{t/ \dl} g_{\dl}(t,v)[X(t-\dl v)- X(t-)]dv \\
&&{}+ X(t-) \bigg(\int_{0}^{t/ \dl} g_{\dl}(t,v)dv+ \frac{1}{d} \bigg) \nonumber \\ \nonumber \\
& =: & J_1(\dl,t) + J_2(\dl,t).\nonumber
\end{eqnarray}
By (\ref{xsup}) and
Lemma~\ref{seq56tag2}(c)
 we immediately get that for any arbitrarily small $h_0>0$, we have
\bd
\lim_{\dl\downarrow 0}\sup_{h_0 \leq t \leq 1}|J_2(\dl,t)| = 0, \ \ P-\rm{a.s.}
\ed
Since $h_0$ was arbitrary and $X(0-)=X(0)=0$, we get
\bd
\lim_{\dl\downarrow 0}|J_2(\dl,t)| = 0 , \ \  \forall t \in [0,1], \ \ P-\rm{a.s.}
\ed
Now to finish the proof it is enough to show that, $P-\rm{a.s.}$, for every $t \in [0,1]$
\be \label{J1LIM}
\lim_{\dl\downarrow 0}|J_1(\dl,t)| = 0.
\ee
For any $h_0 \in [0,t]$ we can decompose $J_1$ as follows
\begin{eqnarray} \label{decJ1}
J_1(\dl,t)&=& \int_{0}^{h_0/ \dl} g_{\dl}(t,v)[X(t-\dl v)- X(t-)]dv+\int_{h_0/ \dl}^{t/ \dl} g_{\dl}(t,v)[X(t-\dl v)- X(t-)]dv \nonumber \\
&=:& J_{1,1}(\dl,t)+J_{1,2}(\dl,t).
\end{eqnarray}
Let $\eps>0$ be arbitrarily small. $X$ is a c\`{a}dl\`{a}g process therefore, $P-\rm{a.s.}$ $\omega$, for every $t\in[0,1]$ we can fix $h_0\in[0,t]$ small enough such that
\be \label{epsbound11}
|X(t-\dl v,\omega) - X(t-,\omega)|<\eps, \ \ \textrm{for all } v \in (0,h_0 / \dl].
\ee
Let us choose such $h_0$ for the decomposition (\ref{decJ1}).
Then by (\ref{epsbound11}) and
Lemma~ \ref{seq56tag2}(d) we can pick $\dl'>0$ such that for every $\dl\in(0,\dl')$ we have
\begin{eqnarray} \label{Jterm111}
|J_{1,1}(\dl,t)| &\leq& \frac{2\eps}{d}.
\end{eqnarray}
Now let us treat $J_{1,2}$. By (\ref{xsup}) and Lemma \ref{seq56tag2}(a) we get
\begin{eqnarray}  \label{Jterm2}
|J_{1,2}(\dl,t)| &\leq& 2\|X\|_{\infty}  \int_{h_0/ \dl}^{t/ \dl} |g_{\dl}(t,v)|dv \\
&\rr & 0, \ \ \rm{as} \ \ \dl \dr 0, \ \  \emph{P}-\rm{a.s.} \nonumber
\end{eqnarray}
Then by combining (\ref{Jterm111}) and (\ref{Jterm2}), we get (\ref{J1LIM})
and this completes the proof.
\qed

\paragraph{Proof of Proposition \ref{con1}:}
We consider the following decomposition
\begin{eqnarray*}
\int_{0}^{1} f_{\dl}(t,v)X(t+\dl(1-v))dv - \frac{1}{d} X(t) &=&
\int_{0}^{1} f_{\dl}(t,v)[X(t+\dl(1-v))- X(t)]dv \\
&&{}+ X(t) \bigg(\int_{0}^{1} f_{\dl}(t,v)dv- \frac{1}{d} \bigg) \\\\
& =: & J_1(\dl,t) + J_2(\dl,t).
\end{eqnarray*}
Now the proof follows along the same lines as that of Proposition \ref{gUnifConv}.
By (\ref{xsup}) and
Lemma~\ref{conabsf}(b) we have
\bd
\lim_{\dl\downarrow 0}\sup_{0 \leq t \leq 1}|J_2(\dl,t)| = 0,  \ \ P-\rm{a.s.}
\ed
Hence to complete the proof it is enough to show that, $P-\rm{a.s.}$, for every $t \in [0,1]$
\bd
\lim_{\dl\downarrow 0}|J_1(\dl,t)| = 0.
\ed
Let $\eps>0$ be arbitrarily small. $X$ is a c\`{a}dl\`{a}g process; therefore, $P-\rm{a.s.}$ $\omega$, for every $t\in[0,1]$ we can fix $h_0$ small enough such that
\be \label{epsbound1}
|X(t+\dl(1-v),\omega) - X(t,\omega)|<\eps, \ \ \textrm{for all } v \in (0,h_0 / \dl].
\ee
Then by (\ref{epsbound1}) and
Lemma~\ref{conabsf}(a) we easily get
\bd
\lim_{\dl\downarrow 0}|J_1(\dl,t)| = 0. \ \ \forall t \in [0,1], \ \ P-\rm{a.s.}
\ed
\qed

\section {Proof of Theorem \ref{thm2}}  \label{SecThm2}
Recall that by Lemma \ref{IntParts} we have
\be \label{chain2}
M(t)-M(s)=-(Y(t)-Y(s)), \ \  0 \leq s<  t,
\ee
where
\bd
Y(s)= \int_{0}^{s} f(s,r) X(r)dr, \ \ s>0.
\ed
Then by Lemma \ref{Varchange} we get:
\begin{eqnarray} \label{decY}
\frac{Y(s+\dl)-Y(s)}{\dl f(s+\dl,s)}&=& \frac{J_{1}(s,v,\dl)}{\dl f(s+\dl,s)}+\frac{J_{2}(s,v,\dl)}{\dl f(s+\dl,s)},
\ \ \dl>0.
\end{eqnarray}
Recall that $J_{1}$ and $J_{2}$ are defined in (\ref{J1dec}) and (\ref{J2dec}).
\paragraph{\textbf{Convention:}}
Denote by $\Gamma\subset\Omega$ the set of paths of $X(\cdot,\omega)$ which are right continuous and have left limit. By the assumptions of the theorem, $P(\Gamma)=1$. In what follows we are dealing with $\omega \in \Gamma$.
Therefore, for every $\varepsilon>0$ and $t>0$ there exists $\eta=\eta(\varepsilon,t,\omega)>0$ such that:
\begin{eqnarray} \label{jump}
|X(t-)-X(s)|&\leq& \varepsilon, \    \qquad      \textrm{for all}  \  \ \ s \in [t-\eta , t), \\
|X(t)-X(s)|&\leq& \varepsilon,      \    \qquad      \textrm{for all}  \  \ \ s \in [t,t+\eta]. \nonumber
\end{eqnarray}
Let us fix an arbitrary $\varepsilon>0$. The interval $[0,1]$ is compact; therefore there exist points $t_{1},\ldots, t_{m}$ that define a cover of $[0,1]$ as follows:
\begin{displaymath}
[0,1]\subset\bigcup_{k=1}^{m}\big(t_{k}-\frac{\eta_{k}}{2} , t_{k}+\frac{\eta_{k}}{2}\big),
\end{displaymath}
where we denote $\eta_{k}=\eta(\eps,t_k)$. Note that if $\Delta _{X}(s)>2\varepsilon$ then $s=t_{k}$ for some $k$. \\
We can also construct this cover in a way that
\be \label{tor}
\inf_{k\in \{2,\ldots,m\}} \big(t_k-\frac{\eta_k}{2}\big) \geq t_1.
\ee
Also since $X(t)$ is right continuous at $0$, we can choose $t_1$ sufficiently small such that
\be \label{Xor}
\sup_{t\in(0,t_1+\frac{\eta_1}{2})}|X(t)| \leq \eps.
\ee
Denote:
\be \label{bk}
B_{k}=\big(t_{k}-\eta_{k} , t_{k}+\eta_{k} \big), \ \ B_{k}^{*}=\big(t_{k}-\frac{\eta_{k}}{2} , t_{k}+\frac{\eta_{k}}{2}\big).
\ee
Note that the coverings $B_{k}$ and $B_{k}^{*}$ we built above are random---they depend on a particular realization of $X$. For the rest of this section we will be working with the particular realization of $X(\cdot,\omega)$, with $\omega \in \Gamma$ and the corresponding coverings $B_{k}$, $B_{k}^{*}$. All the constants that appear below may depend on $\omega$ and the inequalities should be understood $P$-a.s.\\\\
Let $s,t\in B_{k}^{*}$ and denote $\delta=t-s$.
Recall the notation from Propositions \ref{gUnifConv} and \ref{con1}.
Let us decompose $\frac{J_{i}(s,\dl)}{\dl f(s+\dl,s)}, \ \ i=1,2,$ as follows:
\begin{eqnarray*}
\frac{J_{1}(s,\dl)+J_{2}(s,\dl)}{\dl f(s+\dl,s)} &=& X(t_{k}-) \bigg[\int_{0}^{1} f_{\dl}(s,v)dv+\int_{0}^{s/ \dl} g_{\dl}(s,v)dv \bigg]  \\
&&{} + \Delta _{X}(t_{k}) \bigg[\int_{0}^{1} f_{\dl}(s,v) \mathds{1}_{\{s+\dl(1-v)\geq t_{k}\}}dv +\int_{0}^{s/ \dl} g_{\dl}(s,v) \mathds{1}_{\{s-\dl v\geq t_{k}\}}dv \bigg]  \\
&&{} +\int_{0}^{1} f_{\dl}(s,v) \mathds{1}_{\{s+\dl(1-v)< t_{k}\}} [X(s+\dl (1-v))- X(t_{k}-)]dv \\
&&{} + \int_{0}^{s/ \dl} g_{\dl}(s,v) \mathds{1}_{\{s-\dl v< t_{k}\}} [X(s-\dl v)- X(t_{k}-)]dv \\
&&{} + \int_{0}^{1} f_{\dl}(s,v) \mathds{1}_{\{s+\dl(1-v)\geq t_{k}\}}[X(s+\dl (1-v))- X(t_{k}+)]dv \\
&&{} +\int_{0}^{s/ \dl} g_{\dl}(s,v) \mathds{1}_{\{s-\dl v \geq t_{k}\}}[X(s-\dl v)- X(t_{k}+)]dv  \\ \\
&=:&  D_{1}(k,s,\dl) + D_{2}(k,s,\dl) +\ldots+D_{6}(k,s,\dl) ,
\end{eqnarray*} \\
where $\mathds{1}$ is the indicator function.
The proof of Theorem \ref{thm2} will follow as we handle the terms $D_i$, $i=1,2,\ldots,6$ via a series of lemmas.

\begin{lemma} \label{firstcor}
There exists a sufficiently small $h_{\ref{firstcor}}>0$ such that
\be \label{D1Bound}
|D_1(k,s,\dl)| \leq \bigg(|X(t_k-)|+\frac{4}{d}\bigg)\eps, \ \ \forall k \in \{1,\ldots,m\}, \ \ s\in B_k^{*}, \ \ \dl\in(0,h_{\ref{firstcor}}).
\ee
\end{lemma}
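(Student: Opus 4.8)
The plan is to exploit that $D_1(k,s,\dl)=X(t_k-)\big[\int_0^1 f_\dl(s,v)\,dv+\int_0^{s/\dl}g_\dl(s,v)\,dv\big]$ has the deterministic bracket factored cleanly off the single value $X(t_k-)$, and that the two integrals in the bracket converge respectively to $\frac{1}{d}$ and $-\frac{1}{d}$. I would split the argument into the two regimes the cover was built around: the indices $k\ge 2$, where $s$ stays bounded away from $0$, and $k=1$, where $X$ is uniformly small near the origin. Writing $h_0=t_1+\frac{\eta_1}{2}$, I then produce one threshold $h'$ for the first regime and one threshold $h''$ for the second, and set $h_{\ref{firstcor}}=\min(h',h'')$.

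For $k\ge 2$, estimate (\ref{tor}) guarantees $s>t_k-\frac{\eta_k}{2}\ge t_1>0$ for every $s\in B_k^{*}$. Hence I may apply Lemma \ref{conabsf}(b) together with Lemma \ref{seq56tag2}(c), both with $h_0=t_1$, to conclude that the bracket $\int_0^1 f_\dl(s,v)\,dv+\int_0^{s/\dl}g_\dl(s,v)\,dv$ tends to $\frac{1}{d}-\frac{1}{d}=0$ uniformly over $s\in[t_1,1]$. By uniformity there is a threshold $h'>0$ making the bracket $\le\eps$ in absolute value for all $\dl<h'$ and all such $s$; therefore $|D_1(k,s,\dl)|\le|X(t_k-)|\,\eps\le(|X(t_k-)|+\frac{4}{d})\eps$, which is even stronger than required.

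For $k=1$ the bracket need not be small, but now (\ref{Xor}) forces $|X(t_1-)|\le\eps$ (take the left limit of $X$ at $t_1$, using that the relevant arguments lie in $(0,t_1+\frac{\eta_1}{2})$), so it suffices to bound the bracket by the \emph{constant} $\frac{4}{d}$ uniformly over $s\in[0,h_0)$ and small $\dl$; this yields $|D_1(1,s,\dl)|\le\frac{4\eps}{d}\le(|X(t_1-)|+\frac{4}{d})\eps$. The $f$-part is immediate: Lemma \ref{conabsf}(a) gives $\int_0^1|f_\dl(s,v)|\,dv\to\frac{1}{d}$ uniformly over \emph{all} $s\in[0,1]$, so $|\int_0^1 f_\dl(s,v)\,dv|\le\frac{2}{d}$ for $\dl$ small. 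It thus remains to bound the $g$-integral by $\frac{2}{d}$, and $h''$ is the threshold this bound produces.

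The main obstacle is precisely this $g$-part bound for $s$ near $0$: I need $|\int_0^{s/\dl}g_\dl(s,v)\,dv|\le\frac{2}{d}$ uniformly down to $s=0$, and here the convergence statements Lemma \ref{seq56tag2}(c),(d) are unavailable, since they assume $t\ge h_0$, whereas for $k=1$ the value $s<h_0$ is arbitrarily small. What rescues the estimate is that for $v\le s/\dl$ the points $(s,s-\dl v)$ and $(s+\dl,s-\dl v)$ both sit within distance $\le h_0+\dl$ of the diagonal, so the smooth-variation asymptotics of $f$ still apply uniformly in the small parameter $t=s$; this is exactly the near-diagonal input that underlies Lemma \ref{seq56tag2}. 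Concretely I would re-run the estimate behind part (d) with small $t=s$ (using that $\int_0^M g_\dl(s,v)\,dv$ is monotone in $M$ because $g_\dl$ is of one sign near the diagonal) to obtain $\int_0^{s/\dl}|g_\dl(s,v)|\,dv\le\frac{2}{d}$ directly. The model case $f(t,r)=-d(t-r)^{d-1}$, for which $g_\dl(s,v)=(1+v)^{d-1}-v^{d-1}$ and $\int_0^{M}|g_\dl|\le\frac{1}{d}$ for every $M$, shows the bound holds with room to spare and confirms that the constant $\frac{4}{d}$ is comfortable.
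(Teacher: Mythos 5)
Your proposal is correct and follows essentially the same route as the paper: the same split into $k\ge 2$ (where (\ref{tor}) keeps $s\ge t_1$ so that Lemma~\ref{seq56tag2}(c) and Lemma~\ref{conabsf}(b) make the bracket uniformly small, giving the bound $|X(t_k-)|\eps$) and $k=1$ (where (\ref{Xor}) gives $|X(t_1-)|\le\eps$ and the bracket is bounded by $4/d$). The only difference is that the ``main obstacle'' you work around by hand --- controlling $\int_0^{s/\dl}g_\dl(s,v)\,dv$ for $s$ near $0$ --- is already covered by Lemma~\ref{seq56tag2}(b), whose supremum runs over \emph{all} $t\in[0,1]$ with the fixed upper limit $h_0/\dl$ (the paper applies it with $h_0=t_1+\eta_1$, which dominates $s/\dl$ for $s\in B_1^*$ after using the one-sign property of $g_\dl$ near the diagonal); your monotonicity argument essentially reproves that part of the lemma.
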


\begin{proof}
By
Lemma~\ref{seq56tag2}(c),
Lemma~\ref{conabsf}(b) and by our assumptions on the covering we get (\ref{D1Bound}) for $k=2,\ldots,m$.
As for $k=1$, we get by (\ref{Xor})
\be \label{xt1}
|X(t_1-)| \leq \eps.
\ee
By
Lemma~\ref{seq56tag2}(b) we have
\be \label{t1eta}
\sup_{ 0 \leq s \leq 1}\bigg|\int_{0}^{(t_1+\eta_1)/ \dl} g_{\dl}(s,v)dv +\frac{1}{d}\bigg| <\varepsilon/2.
\ee
Hence by 
Lemma \ref{seq56tag2}(c),
 (\ref{xt1}) and (\ref{t1eta}), for a sufficiently small $\dl$, we get
\begin{eqnarray*}
\sup_{s\in B_1^{*}} |D_1(1,s,\dl)| & \leq & \eps \sup_{s\in B_1^{*}} \bigg\{ \bigg|\int_{0}^{1} f_{\dl}(s,v)dv \bigg| + \int_{0}^{(t_1+\eta_1)/ \dl} |g_{\dl}(s,v)|dv  \bigg\} \\
&\leq & \eps\frac{4}{d} ,
\end{eqnarray*}
and (\ref{D1Bound}) follows.
\end{proof}  \\\\
To handle the $D_2$ term we need the following lemma.

\begin{lemma} \label{con3}
Let $g_{\dl}(s,v)$ and $f_{\dl}(s,v)$ be defined as in Propositions \ref{gUnifConv} and \ref{con1}.
Then there exists $h_{\ref{con3}}>0$ such that for all $\dl  \in (0,h_{\ref{con3}})$,  \\
\bd
\bigg|\int_{0}^{1} f_{\dl}(s,v) \mathds{1}_{\{s+\dl(1-v)\geq t_{k}\}}dv +\int_{0}^{s/ \dl} g_{\dl}(s,v) \mathds{1}_{\{s-\dl v\geq t_{k}\}}dv \bigg| \leq \frac{1}{d}+\varepsilon, \ \ \forall k \geq 1,\textrm{ } s \in[0,1].
\ed
\end{lemma}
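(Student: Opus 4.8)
The plan is to treat the two integrals separately and to exploit the fact that the indicators $\mathds{1}_{\{s+\dl(1-v)\ge t_k\}}$ and $\mathds{1}_{\{s-\dl v\ge t_k\}}$ only \emph{restrict} the domains of integration. The key structural input is the sign pattern of the integrands coming from the smoothly varying assumption: for $\dl$ small, $f_\dl(s,v)\ge 0$ while $g_\dl(s,v)\le 0$ (this is exactly the kind of elementary consequence of smooth variation underlying Lemmas~\ref{seq56tag2} and~\ref{conabsf}; indeed in both lemmas the $L^1$-mass and the signed integral agree in the limit, which already forces the negative part of $f_\dl$ and the positive part of $g_\dl$ to be asymptotically negligible). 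Granting this, I would show that the first integral always lies in $[0,\tfrac1d+\eps]$ and the second in $[-\tfrac1d-\eps,0]$, uniformly in $s\in[0,1]$ and in $k$; since these two ranges have opposite signs, the sum is trapped in $[-\tfrac1d-\eps,\tfrac1d+\eps]$, which is the claim after relabelling the tolerance.

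First I would dispose of the $f_\dl$-term. Because $f_\dl\ge 0$ and $0\le \mathds{1}_{\{s+\dl(1-v)\ge t_k\}}\le 1$,
\[
0\ \le\ \int_0^1 f_\dl(s,v)\,\mathds{1}_{\{s+\dl(1-v)\ge t_k\}}\,dv\ \le\ \int_0^1 f_\dl(s,v)\,dv,
\]
and Lemma~\ref{conabsf}(b) bounds the right-hand side by $\tfrac1d+\eps$ for $\dl$ small, uniformly in $s$. This disposes of this integral for every $k$ at once, since the threshold $t_k$ has dropped out of the estimate entirely.

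Next I would treat the $g_\dl$-term, where the small-$s$ regime is the delicate point. Using $g_\dl\le 0$ together with the indicator bound,
\[
\int_0^{s/\dl} g_\dl(s,v)\,dv\ \le\ \int_0^{s/\dl} g_\dl(s,v)\,\mathds{1}_{\{s-\dl v\ge t_k\}}\,dv\ \le\ 0,
\]
so it remains to bound $\int_0^{s/\dl} g_\dl(s,v)\,dv$ from below by $-\tfrac1d-\eps$ uniformly in $s$. For $s\ge h_0$ this is precisely Lemma~\ref{seq56tag2}(c). For $s<h_0$ part~(c) is unavailable, but since $g_\dl\le 0$ the map $b\mapsto\int_0^{b}g_\dl\,dv$ is nonincreasing, whence $\int_0^{s/\dl}g_\dl\,dv\ge\int_0^{h_0/\dl}g_\dl\,dv$, and the latter converges to $-\tfrac1d$ uniformly in $s\in[0,1]$ by Lemma~\ref{seq56tag2}(b). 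Thus the lower bound holds for all $s\in[0,1]$.

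Adding the two estimates gives the lemma with a single threshold $h_{\ref{con3}}$, namely the minimum of the thresholds produced above. The main obstacle is exactly the uniformity down to $s=0$: Lemma~\ref{seq56tag2}(c), which would instantly pin the whole $g_\dl$-integral to its limit $-\tfrac1d$, degrades near the origin, and it is the monotonicity afforded by the sign of $g_\dl$, combined with the genuinely uniform part~(b), that rescues the estimate there. A second point to verify---but an easy one---is that the bound is independent of $k$: in each step the indicator only shrinks the domain and is then discarded, so no dependence on $t_k$ survives. If one wishes to avoid invoking the pointwise sign, the bounds for $s\ge h_0$ can instead be obtained by splitting each integrand into positive and negative parts and using $\int_0^1 f_\dl^-\,dv\to0$ (from Lemma~\ref{conabsf}(a),(b)), $\int_0^{h_0/\dl} g_\dl^+\,dv\to0$ (from Lemma~\ref{seq56tag2}(b),(d)) and $\int_{h_0/\dl}^{s/\dl}|g_\dl|\,dv\to0$ (from Lemma~\ref{seq56tag2}(a)); the regime $s<h_0$, however, still rests on the sign of $g_\dl$, which is why it is natural to use it from the outset.
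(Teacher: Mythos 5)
Your overall strategy is the same as the paper's: exploit the sign pattern $f_\dl\ge 0$, $g_\dl\le 0$ coming from Definition \ref{smt.var2.0} so that the indicators only shrink one-signed integrals, and then bound the sum of a nonnegative and a nonpositive quantity by the maximum of the two one-sided bounds, which Lemmas \ref{conabsf}(b) and \ref{seq56tag2}(a),(b),(d) pin at $1/d$ up to $\eps$. Your treatment of the $f_\dl$-integral is sound: there $v$ ranges over $(0,1)$, so the second argument of $f$ stays within $\dl$ of the diagonal, and the nonnegativity of $f_\dl$ for all small $\dl$, uniformly in $s$, is a legitimate consequence of the smooth-variation conditions.

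The gap is in the $g_\dl$-term. You assert $g_\dl(s,v)\le 0$ on the whole range $v\in(0,s/\dl)$ and lean on this three times: for the upper bound $\int_0^{s/\dl}g_\dl\mathds{1}\,dv\le 0$, for the comparison $\int_0^{s/\dl}g_\dl\,dv\le\int_0^{s/\dl}g_\dl\mathds{1}\,dv$, and for the monotonicity of $b\mapsto\int_0^{b}g_\dl\,dv$ in the regime $s<h_0$. But Definition \ref{smt.var2.0} constrains $F$ and its derivatives only as the second argument approaches the first, whereas in $g_\dl(s,v)$ the relevant gap is $\dl v$, which runs all the way up to $s$; the sign of the numerator $f(s+\dl,s-\dl v)-f(s,s-\dl v)=\int_s^{s+\dl}F^{(1,1)}(u,s-\dl v)\,du$ is therefore uncontrolled once $\dl v$ is bounded away from $0$ (one can modify the model kernel $(t-r)^d$ away from the diagonal without leaving $SR_d^2(0+)$). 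This is precisely why the paper's proof claims the sign of $g_\dl$ only for $v\in(0,h_1/(2\dl))$ and kills the remaining range by the tail estimate $\int_{h_1/(2\dl)}^{s/\dl}|g_\dl|\,dv\le\eps/2$ from Lemma \ref{seq56tag2}(a). Your closing paragraph sketches essentially this repair (positive/negative parts plus the tail estimate), so the argument is salvageable at the cost of an extra $\eps/2$ in each one-sided bound; note also that, contrary to your last sentence, the regime $s<h_0$ needs nothing beyond the near-diagonal sign, since there $(0,s/\dl)\subset(0,h_0/\dl)$ and you are free to choose $h_0$ below the sign threshold from the outset.
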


\begin{proof}
We introduce the following notation
\bd
I_{1}(s,\dl)= \int_{0}^{1} f_{\dl}(s,v) \mathds{1}_{\{s+\dl(1-v)\geq t_{k}\}}dv,
\ed
\bd
I_{2}(s,\dl)= \int_{0}^{s/ \dl} g_{\dl}(s,v) \mathds{1}_{\{s-\dl v\geq t_{k}\}}dv.
\ed
From Definition \ref{smt.var2.0}, it follows that there exists $h_1>0$, such that for every $\dl \in h_1$, $v \in (0,\frac{h_1}{2\dl})$ and $s\in[0,1]$
\begin{eqnarray} \label{gdlTerm}
g_{\dl}(s,v) & \leq & 0,
\end{eqnarray}
and
\begin{eqnarray} \label{fdlTerm}
f_{\dl}(s,v) & \geq & 0.
\end{eqnarray}
By Lemma \ref{seq56tag2}(a), we can fix a sufficiently small $h_2\in(0,h_1/2)$ such that for every $\dl \in (0,h_2)$, we have
\be \label{intTerm}
\int_{h_1/(2\dl)}^{s/ \dl} |g_{\dl}(s,v)|dv \leq \eps/2, \ \ \forall s \in [h_1/2,1],
\ee
where $\eps$ was fixed for building the covering $\{B_k^*\}_{k=1}^{m}$.
Then, by (\ref{intTerm}), we have
\begin{eqnarray} \label{Bund13}
|I_1(s,\dl)+I_2(s,\dl)| &\leq& \bigg|I_1 +  \int_{0}^{(s\wedge\frac{h_1}{2})\dl} g_{\dl}(s,v) \mathds{1}_{\{v \leq \frac{s-t_{k}}{\dl}\}}dv\bigg|+\eps/2,
\end{eqnarray}
for all $s \in [0,1], \ \ \dl \in (0,h_2)$. \\\\
By (\ref{fdlTerm}) and the choice of $h_2\in (0,\frac{h_1}{2}\big)$, we get
\begin{eqnarray}\label{help14}
I_1(s,\dl) &\geq& 0, \ \  \forall s \in [0,1], \ \ \dl \in (0,h_2).
\end{eqnarray}
By (\ref{gdlTerm}) we have
\begin{eqnarray} \label{help13}
\int_{0}^{(s\wedge\frac{h_1}{2})\dl} g_{\dl}(s,v) \mathds{1}_{\{v \leq \frac{s-t_{k}}{\dl}\}}dv
& \leq & 0, \ \  \forall s \in [0,1], \ \ \dl \in (0,h_2).
\end{eqnarray}
Then by (\ref{Bund13}), (\ref{help14}) and (\ref{help13}) we get
\begin{eqnarray} \label{Bund15}
|I_1(s,\dl)+I_2(s,\dl)|
&\leq& \max\bigg\{\int_{0}^{1} f_{\dl}(s,v)dv,\bigg|\int_{0}^{( s \wedge \frac{h_1}{2}) / \dl} g_{\dl}(s,v)dv \bigg| \bigg\} +\eps/2
\end{eqnarray}
for all $s \in [0,1], \ \ \dl \in (0,h_2)$. \\\\
By (\ref{Bund15}),
Lemma~\ref{conabsf}(b) and
Lemma~\ref{seq56tag2}(d) we can fix $h_{\ref{con3}}$ sufficiently small such that
\bd
|I_1(s,\dl)+I_2(s,\dl)| \leq \frac{1}{d}+\eps
\ed
and we are done.
\end{proof} \\\\
Note that
\be \label{d211}
|D_{2}(k,s,\dl)| = \Delta _{X}(t_{k}) \bigg[\int_{0}^{1} f_{\dl}(s,v) \mathds{1}_{\{s+\dl(1-v)\geq t_{k}\}}dv +\int_{0}^{s/ \dl} g_{\dl}(s,v) \mathds{1}_{\{s-\dl v\geq t_{k}\}}dv \bigg]
\ee
Then the immediate corollary of Lemma \ref{con3} and (\ref{d211}) is
\begin{corollary} \label{corrd2}
\bd
|D_{2}(k,s,\dl)|\leq |\Delta _{X}(t_{k})| \big(\varepsilon+\frac{1}{d}\big) , \ \ \forall k \in \{1,\ldots,m\}, \ \ s\in[0,1], \ \ \dl\in(0,h_{\ref{con3}}).
\ed
\end{corollary}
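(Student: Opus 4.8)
The plan is to read the corollary off directly from the factored identity \eqref{d211} together with Lemma~\ref{con3}; in fact the statement is an immediate consequence, since all the analytic work has already been carried out in establishing Lemma~\ref{con3}. The key observation is that $D_{2}(k,s,\dl)$ is literally the product of the scalar $\Delta_{X}(t_{k})$ with the bracketed integral expression
\[ I_{1}(s,\dl)+I_{2}(s,\dl)=\int_{0}^{1} f_{\dl}(s,v)\,\mathds{1}_{\{s+\dl(1-v)\geq t_{k}\}}\,dv+\int_{0}^{s/\dl} g_{\dl}(s,v)\,\mathds{1}_{\{s-\dl v\geq t_{k}\}}\,dv, \]
and that, for the fixed realization $\omega\in\Gamma$ we are working with throughout this section, $\Delta_{X}(t_{k})=X(t_{k})-X(t_{k}-)$ is a constant independent of $s$, $v$ and $\dl$.

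First I would take absolute values in the defining identity for $D_{2}$ (equivalently, in \eqref{d211}) and pull the constant factor out of the absolute value, obtaining $|D_{2}(k,s,\dl)|=|\Delta_{X}(t_{k})|\,|I_{1}(s,\dl)+I_{2}(s,\dl)|$. Then I would invoke Lemma~\ref{con3}, which supplies exactly the bound $|I_{1}(s,\dl)+I_{2}(s,\dl)|\leq \frac{1}{d}+\eps$, valid uniformly over all $k\geq 1$, all $s\in[0,1]$, and all $\dl\in(0,h_{\ref{con3}})$. Since $\{1,\ldots,m\}\subseteq\{k:k\geq 1\}$, the lemma applies to every index appearing in the corollary; multiplying the bound through by $|\Delta_{X}(t_{k})|$ yields $|D_{2}(k,s,\dl)|\leq|\Delta_{X}(t_{k})|\big(\eps+\frac{1}{d}\big)$ on precisely the claimed range of $\dl$.

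There is no real obstacle here: the substantive estimates — the sign information \eqref{gdlTerm}--\eqref{fdlTerm} for $g_{\dl}$ and $f_{\dl}$ near the diagonal, together with the tail control from Lemma~\ref{seq56tag2}(a) and the uniform-in-$s$ convergence from Lemma~\ref{conabsf}(b) and Lemma~\ref{seq56tag2}(d) — were all discharged inside the proof of Lemma~\ref{con3}. The corollary merely records the consequence of scaling that uniform bound by the jump magnitude $|\Delta_{X}(t_{k})|$. The only thing to verify is the bookkeeping, namely that the threshold $h_{\ref{con3}}$ and the uniformity over $s$ and $k$ carried by Lemma~\ref{con3} match the quantifiers asserted in the corollary, which they do verbatim.
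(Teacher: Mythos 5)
Your argument is exactly the paper's: the authors present Corollary \ref{corrd2} as ``the immediate corollary of Lemma \ref{con3} and (\ref{d211})'', i.e.\ factor out the constant $|\Delta_{X}(t_{k})|$ from (\ref{d211}) and multiply the uniform bound $|I_{1}(s,\dl)+I_{2}(s,\dl)|\leq \frac{1}{d}+\eps$ of Lemma \ref{con3} by it. Your proposal is correct and matches the paper's route verbatim, including the quantifier bookkeeping over $k$, $s$ and $\dl$.
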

One can easily deduce the next corollary of Lemma \ref{con3}.
\begin{corollary} \label{rcorrd2}
There exists $h_{\ref{rcorrd2}}>0$ such that
\be \label{d2}
\big|\sup_{s\in B_k^{*}} |D_{2}(k,s,\dl)|-|\Delta _{X}(t_{k})|\frac{1}{d} \big| \leq \varepsilon|\Delta _{X}(t_{k})| , \ \ \forall k \in \{1,\ldots,m\},  \ \ \dl\in(0,h_{\ref{rcorrd2}}).
\ee
\end{corollary}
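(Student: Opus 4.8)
The plan is to reduce the statement to a two-sided estimate on the bracketed integral expression and then bound that quantity from above and below. Writing $I_1(s,\dl)$ and $I_2(s,\dl)$ for the two integrals introduced in the proof of Lemma~\ref{con3}, observe that $\Delta_{X}(t_k)$ is a constant not depending on $s$, so it factors out of both the modulus and the supremum:
\bd
\sup_{s\in B_k^{*}}|D_{2}(k,s,\dl)| = |\Delta_{X}(t_k)|\,\sup_{s\in B_k^{*}}|I_1(s,\dl)+I_2(s,\dl)|.
\ed
Hence, when $\Delta_{X}(t_k)\neq 0$ (the case $\Delta_{X}(t_k)=0$ being trivial, as both sides of (\ref{d2}) vanish), the corollary is equivalent to the estimate $\big|\sup_{s\in B_k^{*}}|I_1(s,\dl)+I_2(s,\dl)|-\frac{1}{d}\big|\leq\eps$.

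The upper bound is immediate from the preceding lemma: Lemma~\ref{con3} gives $|I_1(s,\dl)+I_2(s,\dl)|\leq\frac{1}{d}+\eps$ uniformly in $s\in[0,1]$ for all $\dl\in(0,h_{\ref{con3}})$, so taking the supremum over $s\in B_k^{*}$ yields $\sup_{s\in B_k^{*}}|I_1+I_2|\leq\frac{1}{d}+\eps$.

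The real content is the matching lower bound, and the key idea is to exhibit a single point of $B_k^{*}$ at which the integral is already close to $\frac{1}{d}$. The natural candidate is $s=t_k$, which lies in the interior of the open interval $B_k^{*}=(t_k-\frac{\eta_k}{2},t_k+\frac{\eta_k}{2})$. At this point the two indicators collapse: in $I_1$ the condition $\{t_k+\dl(1-v)\geq t_k\}$ holds for every $v\in[0,1]$, so $I_1(t_k,\dl)=\int_{0}^{1}f_{\dl}(t_k,v)dv$; while in $I_2$ the condition $\{t_k-\dl v\geq t_k\}$ fails for every $v>0$, so $I_2(t_k,\dl)=0$. Thus $I_1(t_k,\dl)+I_2(t_k,\dl)=\int_{0}^{1}f_{\dl}(t_k,v)dv$, and Lemma~\ref{conabsf}(b) shows that for all $\dl$ smaller than some $\dl_0>0$, which is independent of $k$ by the uniformity in $t$, this quantity is at least $\frac{1}{d}-\eps$. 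Since $t_k\in B_k^{*}$, it follows that $\sup_{s\in B_k^{*}}|I_1+I_2|\geq |I_1(t_k,\dl)+I_2(t_k,\dl)|\geq\frac{1}{d}-\eps$.

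Combining the two bounds with the choice $h_{\ref{rcorrd2}}:=\min(h_{\ref{con3}},\dl_0)$ gives $\frac{1}{d}-\eps\leq\sup_{s\in B_k^{*}}|I_1+I_2|\leq\frac{1}{d}+\eps$, and multiplying through by $|\Delta_{X}(t_k)|$ recovers (\ref{d2}). I expect the only genuine subtlety to lie in the lower bound: one must verify that $s=t_k$ is an admissible choice (it is the centre of $B_k^{*}$) and that the indicator simplification there is exact, after which the uniform convergence in Lemma~\ref{conabsf}(b) supplies a single threshold $\dl_0$ valid for every $k$.
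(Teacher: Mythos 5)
Your proof is correct and follows essentially the same route as the paper: the upper bound comes from Lemma~\ref{con3} (via Corollary~\ref{corrd2}), and the lower bound comes from evaluating at $s=t_k$, where the indicators collapse so that $D_2(k,t_k,\dl)=\Delta_X(t_k)\int_0^1 f_{\dl}(t_k,v)\,dv$, and then applying Lemma~\ref{conabsf}(b). Your write-up merely makes explicit the indicator simplification that the paper leaves implicit.
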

\begin{proof}
By Corollary \ref{corrd2} we have
\bd
\sup_{s\in B_k^{*}}|D_{2}(k,s,\dl)|\leq |\Delta _{X}(t_{k})|\frac{1}{d}+|\Delta _{X}(t_{k})|\varepsilon , \ \ \forall k \in \{1,\ldots,m\}, \ \ s\in[0,1], \ \ \dl\in(0,h_{\ref{con3}}).
\ed
To get (\ref{d2}) it is enough to find $s\in  B_k^{*}$ and $h_{\ref{rcorrd2}} \in (0,h_{\ref{con3}})$ such that for all $\dl\in(0,h_{\ref{rcorrd2}})$.
\be \label{d2low}
|D_{2}(k,s,\dl)|\geq |\Delta _{X}(t_{k})|\frac{1}{d}-|\Delta _{X}(t_{k})|\varepsilon , \ \ \forall k \in \{1,\ldots,m\}.
\ee
By picking $s=t_k$ we get
\bd
|D_{2}(k,t_k,\dl)|= \bigg|\Delta _{X}(t_{k}) \int_{0}^{1} f_{\dl}(t_k,v)dv \bigg|.
\ed
Then by
Lemma~\ref{conabsf}(b), (\ref{d2low}) follows and we are done.
\end{proof} \\\\
The term $|D_{3}(k,s,\dl)|+|D_{5}(k,s,\dl)|$ is bounded by the following lemma.

\begin{lemma} \label{D3D5}
There exists a sufficiently small $h_{\ref{D3D5}}$ such that
\bd
|D_{3}(k,t_k,\dl)|+|D_{5}(k,t_k,\dl)|\leq \varepsilon \cdot \frac{4}{d},  \ \ \forall k \in \{1,\ldots,m\}, \ \ s \in B_{k}^{*}, \ \ \forall \dl \in(0,h_{\ref{D3D5}}).
\ed
\end{lemma}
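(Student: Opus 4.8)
The plan is to observe that both $D_3$ and $D_5$ are built from the same forward kernel $f_\dl$, but carry the complementary indicators $\mathds{1}_{\{s+\dl(1-v)<t_k\}}$ and $\mathds{1}_{\{s+\dl(1-v)\geq t_k\}}$, and that in each case the bracketed increment of $X$ is controlled by $\varepsilon$ through the oscillation bound (\ref{jump}). Thus the whole argument reduces to (i) checking that the arguments of $X$ stay inside the window on which (\ref{jump}) is valid, and (ii) integrating against $|f_\dl|$ using the uniform $L^1$ control of Lemma \ref{conabsf}(a).

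First I would pin down the range of the argument $u:=s+\dl(1-v)$. For $s,t\in B_k^{*}$ with $\dl=t-s>0$ and $v\in[0,1]$ one has $u\in[s,t]\subset B_k^{*}=(t_k-\tfrac{\eta_k}{2},t_k+\tfrac{\eta_k}{2})$. On the support of $D_3$ the indicator forces $u<t_k$, hence $u\in[t_k-\eta_k,t_k)$, and the first line of (\ref{jump}) gives $|X(u)-X(t_k-)|\leq\varepsilon$. On the support of $D_5$ the indicator forces $u\geq t_k$, hence $u\in[t_k,t_k+\eta_k]$; using the right continuity $X(t_k+)=X(t_k)$, the second line of (\ref{jump}) gives $|X(u)-X(t_k+)|\leq\varepsilon$.

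With these two pointwise bounds in hand I would factor $\varepsilon$ out of each integral and bound the indicators by $1$, obtaining
\bd
|D_3(k,s,\dl)|+|D_5(k,s,\dl)|\leq 2\varepsilon\int_0^1|f_\dl(s,v)|\,dv .
\ed
By Lemma \ref{conabsf}(a) the integral converges to $1/d$ uniformly in $s\in[0,1]$, so I can choose $h_{\ref{D3D5}}$ small enough that $\int_0^1|f_\dl(s,v)|\,dv\leq 2/d$ for all $\dl\in(0,h_{\ref{D3D5}})$ and all $s$, which yields the claimed bound $\varepsilon\cdot\frac{4}{d}$. One may in fact sharpen this: since the two indicators are complementary, the two integrals add up to a single $\int_0^1|f_\dl(s,v)|\,dv$, giving the better constant $2/d$; but the cruder splitting already suffices for the statement.

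The step that needs the most care is the bookkeeping in the second paragraph---matching the left-approach and right-approach halves of (\ref{jump}) to the two indicator regimes, and in particular invoking $X(t_k+)=X(t_k)$ so that (\ref{jump}) applies verbatim in the $D_5$ regime. Everything else is a direct estimate, and no properties of $f_\dl$ beyond the uniform $L^1$ bound of Lemma \ref{conabsf}(a) are required.
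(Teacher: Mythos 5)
Your proposal is correct and follows essentially the same route as the paper: bound the $X$-increments by $\varepsilon$ using the construction of the covering $B_k^*$ together with (\ref{jump}), drop the indicators to get $2\varepsilon\int_0^1|f_\dl(s,v)|\,dv$, and invoke Lemma \ref{conabsf}(a) to bound that integral by $2/d$ for small $\dl$. Your extra bookkeeping on the range of $s+\dl(1-v)$ and the remark about the complementary indicators giving the sharper constant $2/d$ are fine refinements but do not change the argument.
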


\begin{proof}
By the construction of $B_k$ we get that
\be \label{D3}
|D_{3}(k,s,\dl)|\leq \varepsilon \int_{0}^{1}| f_{\dl}(s,v) |\mathds{1}_{\{s+\dl(1-v)< t_{k}\}}dv, \ \ \forall s \in B^*_k, \ \ k=\{1,\ldots,m\}, \ \ \dl \in (0,\eta_k/2),
\ee
and
\be \label{D5}
|D_{5}(k,s,\dl)|\leq \varepsilon \int_{0}^{1}| f_{\dl}(s,v) |\mathds{1}_{\{s+\dl(1-v)\geq t_{k}\}}dv, \ \ \forall s \in B^*_k, \ \ k=\{1,\ldots,m\}, \ \ \dl \in (0,\eta_k/2).
\ee
From (\ref{D3}), (\ref{D5}) we get
\bd
|D_{3}(k,s,\dl)|+|D_{5}(k,s,\dl)|\leq 2\varepsilon \int_{0}^{1}| f_{\dl}(s,v) |dv ,\ \ \forall \ k=\{1,\ldots,m\},  \ \ s \in B_k^*, \ \ \dl \in (0,\eta/2).
\ed \\
By
Lemma~\ref{conabsf}(a) the result follows.
\end{proof} \\\\
$|D_4(k,s,\dl)|$ is bounded in the following lemma.

\begin{lemma}\label{con4}
There exists $h_{\ref{con4}}>0$  such that for all $\dl  \in (0,h_{\ref{con4}})$:  \\
\be \label{con4eq}
|D_4(k,s,\dl)| \leq \varepsilon \big(\frac{2}{d}+2\|X\|_{\infty}\big) ,\ \ \forall s  \in B^{*}_{k}, \ \ k \in \{1,\ldots,m\}.
\ee
\end{lemma}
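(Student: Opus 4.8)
The plan is to split the integral defining $D_4$ according to how far the running point $s-\dl v$ lies from $t_k$, isolating the regime in which the càdlàg control (\ref{jump}) is available from the regime in which only the global bound $\|X\|_{\infty}$ from (\ref{xsup}) can be used. Recall that
\[
D_4(k,s,\dl)=\int_{0}^{s/\dl} g_{\dl}(s,v)\,\mathds{1}_{\{s-\dl v<t_k\}}\,[X(s-\dl v)-X(t_k-)]\,dv ,
\]
so the indicator already confines the integration to $s-\dl v<t_k$. First I would write $D_4=D_4'+D_4''$, where $D_4'$ collects the part of the integral on which $s-\dl v\ge t_k-\eta_k$ and $D_4''$ the part on which $s-\dl v<t_k-\eta_k$.

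On the close region $t_k-\eta_k\le s-\dl v<t_k$ the first line of (\ref{jump}) gives $|X(s-\dl v)-X(t_k-)|\le\varepsilon$, so $|D_4'|\le\varepsilon\int_0^{s/\dl}|g_{\dl}(s,v)|\,dv$. Combining Lemma~\ref{seq56tag2}(d), which drives $\int_0^{h_0/\dl}|g_{\dl}|\,dv$ to $1/d$, with Lemma~\ref{seq56tag2}(a), which kills the remaining tail, the full integral $\int_0^{s/\dl}|g_{\dl}(s,v)|\,dv$ is at most $2/d$ for all $\dl$ small enough, whence $|D_4'|\le\varepsilon\cdot 2/d$.

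The heart of the matter is the far term $D_4''$, where $s-\dl v<t_k-\eta_k$ and the càdlàg control is unavailable; there I would only use $|X(s-\dl v)-X(t_k-)|\le 2\|X\|_{\infty}$. The key observation is that $s\in B_k^{*}$ forces $s-t_k>-\eta_k/2$, so $s-\dl v<t_k-\eta_k$ entails $\dl v>\eta_k/2$; hence this region is contained in $\{v>\eta_k/(2\dl)\}$. Taking $h_0=\min_{1\le k\le m}\eta_k/2>0$, which is legitimate because the cover is finite, the far region lies in $\{v\ge h_0/\dl\}$ uniformly in $k$, and Lemma~\ref{seq56tag2}(a) makes $\int_{h_0/\dl}^{s/\dl}|g_{\dl}(s,v)|\,dv\le\varepsilon$ for all sufficiently small $\dl$. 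Thus $|D_4''|\le 2\|X\|_{\infty}\,\varepsilon$, and adding the two estimates gives (\ref{con4eq}).

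I expect the delicate point, and hence the main obstacle, to be the uniformity of these bounds as $k$ and $s$ range, together with the behaviour near the origin. Finiteness of the cover lets me fix a single threshold $h_0$ valid for every $k$, after which the suprema in Lemma~\ref{seq56tag2}(a),(d) furnish the estimates simultaneously for all $s\in B_k^{*}$ that are bounded away from $0$. For $k=1$, where $B_1^{*}$ reaches toward the origin and $t_1-\eta_1$ may be negative, the far region is empty and (\ref{Xor}) forces both $X(s-\dl v)$ and $X(t_1-)$ to have modulus at most $\varepsilon$, so the same final estimate survives; everything else is a direct application of the smooth-variation estimates already recorded in Lemma~\ref{seq56tag2}.
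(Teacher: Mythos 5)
Your proof is correct and follows essentially the same route as the paper's: a split of the integration range into a near region $s-\dl v$ close to $t_k$, where (\ref{jump}) gives the $\varepsilon$ bound and Lemma~\ref{seq56tag2}(d) controls $\int|g_{\dl}|$, and a far region contained in $\{v\geq \eta_k/(2\dl)\}$, where $2\|X\|_{\infty}$ together with Lemma~\ref{seq56tag2}(a) applies. The only cosmetic difference is that the paper carries out the same estimate via an explicit case distinction $s>t_k$ versus $s\leq t_k$ with the corresponding split points in the $v$ variable, whereas you phrase the split directly in terms of $s-\dl v$; the bounds and constants obtained are identical.
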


\begin{proof}
Let $\eps>0$ be arbitrarily small and fix $k \in \{1,\ldots,m\}$.
First we consider the case $s-t_{k}>0, \ \  k \in \{1,\ldots,m\}$.
\begin{eqnarray} \label{Sstar}
\bigg|\int_{0}^{s/ \dl} g_{\dl}(s,v) \mathds{1}_{\{s-\dl v< t_{k}\}} [X(s-\dl v)- X(t_k-)]dv \bigg| \qquad\qquad \qquad  \\  \nonumber \\
 \qquad \leq \int_{(s-t_{k})/\dl}^{(s-t_{k})/\dl+\eta_k/(2\dl)} |g_{\dl}(s,v)||X(s-\dl v)- X(t_k-)|dv  \qquad \qquad \nonumber \\ \nonumber \\
 +\bigg|\int_{(s-t_{k})/\dl+\eta_k/(2\dl)}^{s/ \dl}\mathds{1}_{\{ t_k>\eta_k/2 \}} g_{\dl}(s,v)[X(s-\dl v)- X(t_k-)]dv \bigg|  \nonumber \\ \nonumber \\
:= |I_1(k,s,\dl)|+|I_2(k,s,\dl)|.    \qquad \qquad \qquad \qquad \qquad \qquad   \qquad \quad   \nonumber
\end{eqnarray}
Note that the indicator in $I_2(k,s,\dl)$ makes sure that $s/\dl > (s-t_{k})/\dl+\eta_k/(2\dl)$. \\\\
By the defintion of $B_k$ in (\ref{bk}) and by
Lemma~ \ref{seq56tag2}(d)
there exists $h_1>0$ such that for every $\dl \in (0,h_1)$ we have uniformly on $s \in B^{*}_{k}\bigcap [t_k,1]$
\begin{eqnarray} \label{IS1}
|I_1(k,s,\dl)| &\leq& \frac{2}{d}\eps.
\end{eqnarray}
By Lemma \ref{seq56tag2}(a), there exists $h_2 \in (0,h_1)$ such that for every $\dl \in (0,h_2)$ we have uniformly on $s \in B^{*}_{k}\bigcap [t_k,1]$ (note that if $ t_k\leq \eta_k/2$ then $I_2(k,s,\dl)=0$)
\begin{eqnarray}\label{IS2}
|I_2(k,s,\dl)|
&\leq&  2\|X\|_{\infty} \eps.
\end{eqnarray}
By (\ref{Sstar}), (\ref{IS1}) and (\ref{IS2}) we get (\ref{con4eq}). \\\\
Consider the case $s \leq t_{k},  \ \ s\in B_k^*,\ \  k \in \{1,\ldots,m\}$. Then we have
\begin{eqnarray} \label{star2}
\int_{0}^{s/ \dl} g_{\dl}(s,v) \mathds{1}_{\{s-\dl v< t_{k}\}} [X(s-\dl v)- X(t_k-)]dv
&=& \int_{0}^{\eta_k/(2\dl)} g_{\dl}(s,v)[X(s-\dl v)- X(t_k-)]dv  \nonumber \\
&+& \int_{\eta_k/(2\dl)}^{s/\dl} g_{\dl}(s,v)[X(s-\dl v)- X(t_k-)]dv  \nonumber \\ \nonumber \\
&=:& J_1(k,s,\dl)+J_2(k,s,\dl).
\end{eqnarray}
Note that if $v\in(0,\eta_k/(2\dl))$, $s \leq t_{k}$ and $s\in B_k^*$, then $s-\dl v \in (t_k-\eta_k,t_k)$. Hence, by the construction of $B_k$ we have
\be \label{Xor1}
\sup_{v\in(0,\eta_k/(2\dl))}|X(s-\dl v)- X(t_k-)| \leq \eps, \ \ \forall s \leq t_{k}, \ \ s\in B_k^*.
\ee
By (\ref{Xor1}) and
Lemma~ \ref{seq56tag2}(d),  there exists $h_4\in (0,h_3)$ such that
\begin{eqnarray}\label{star3}
|J_1(k,s,\dl)| &\leq& \eps \frac{2}{d}, \ \ \forall \dl \in (0,h_4), \ \ s \leq t_{k}, \ \ s \in B_k^*, \ \  k \in \{1,\ldots,m\}.
\end{eqnarray}
By Lemma \ref{seq56tag2}(a), exists $h_{\ref{con4}}\in (0,h_4)$ such that
\begin{eqnarray}\label{star4}
|J_2(k,s,\dl)| &\leq&  2\|X\|_{\infty} \eps , \ \ \forall \dl \in (0,h_{\ref{con4}}), \ \ s \leq t_{k}, \ \ s \in B_k^*, \ \  k \in \{1,\ldots,m\}.
\end{eqnarray}
By combining (\ref{star3}) and (\ref{star4}) with (\ref{star2}), the result follows.
\end{proof} \\\\
$|D_6(k,s,\dl)|$ is bounded in the following lemma.

\begin{lemma}\label{con5}
There exists $h_{\ref{con5}}>0$  such that for all $\dl  \in (0,h_{\ref{con5}})$:  \\
\bd
|D_6(k,s,\dl)| \leq \frac{2\eps}{d}  ,\ \ \forall s  \in B^{*}_{k}, \ \ k \in \{1,\ldots,m\}.
\ed
\end{lemma}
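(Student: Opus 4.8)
The plan is to use the indicator $\mathds{1}_{\{s-\dl v\geq t_k\}}$ in $D_6$ to localise the integral near the diagonal and then pull out a uniform $\eps$-bound on the increment of $X$. Since $\mathds{1}_{\{s-\dl v\geq t_k\}}=\mathds{1}_{\{v\leq (s-t_k)/\dl\}}$ and $t_k\geq 0$, the effective range of integration is $v\in[0,(s-t_k)/\dl]$, which is empty (so $D_6=0$) whenever $s\leq t_k$. Thus I would assume $s\in B_k^{*}$ with $s>t_k$. For such $s$ every $v$ in the support satisfies $t_k\leq s-\dl v\leq s< t_k+\eta_k/2< t_k+\eta_k$, so $s-\dl v$ lies in the right-continuity window $[t_k,t_k+\eta_k]$ of (\ref{jump}). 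Because $X$ is c\`adl\`ag we have $X(t_k+)=X(t_k)$, and hence $|X(s-\dl v)-X(t_k+)|\leq\eps$ throughout the support. Pulling this pointwise bound out of the integral gives
\[
|D_6(k,s,\dl)|\leq \eps\int_{0}^{(s-t_k)/\dl}|g_{\dl}(s,v)|\,dv ,
\]
so the whole lemma reduces to showing that this partial integral is at most $2/d$ for all small $\dl$, uniformly over $k$ and over $s\in B_k^{*}$.

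I expect this integral estimate to be the main obstacle, since the only absolute-value statement at hand, Lemma~\ref{seq56tag2}(d), is uniform merely on $\{t\geq h_0\}$ and degenerates as $s\downarrow 0$ (the case $k=1$, where $t_1$ is taken small). To bypass this I would exploit the sign of $g_{\dl}$ near the diagonal: exactly as in the proof of Lemma~\ref{con3}, there is $h_1>0$ with $g_{\dl}(s,v)\leq 0$ for all $s\in[0,1]$, all $v\in(0,h_1/(2\dl))$ and all small $\dl$. On the portion of the support contained in $[0,h_1/(2\dl)]$ the integrand equals $-g_{\dl}$, so I can drop the absolute value, extend the upper limit up to $h_1/(2\dl)$ (this only enlarges a nonnegative integral), and apply Lemma~\ref{seq56tag2}(b), which is uniform over all of $[0,1]$, to get $-\int_{0}^{h_1/(2\dl)}g_{\dl}(s,v)\,dv\to 1/d$ uniformly in $s$.

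It then remains to control the tail $\int_{h_1/(2\dl)}^{(s-t_k)/\dl}|g_{\dl}(s,v)|\,dv$, which appears only when $(s-t_k)/\dl>h_1/(2\dl)$, i.e.\ when $s-t_k>h_1/2$. In that regime $s>t_k+h_1/2\geq h_1/2$, so $s$ is bounded away from the origin and Lemma~\ref{seq56tag2}(a) (with $h_0=h_1/2$) gives $\int_{h_1/(2\dl)}^{s/\dl}|g_{\dl}(s,v)|\,dv\to 0$ uniformly on $[h_1/2,1]$; since the integrand is nonnegative and $(s-t_k)/\dl\leq s/\dl$, the tail is dominated by this and is $o(1)$. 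Adding the two pieces yields $\int_{0}^{(s-t_k)/\dl}|g_{\dl}(s,v)|\,dv\leq 1/d+o(1)\leq 2/d$ for all small $\dl$, uniformly in $s$ and $k$, whence $|D_6(k,s,\dl)|\leq 2\eps/d$ as claimed. The only genuinely delicate point is the diagonal regime with $s$ near $t_1$ near $0$; the sign argument via Lemma~\ref{seq56tag2}(b) is precisely what makes that regime uniform, and everything else is a routine appeal to the uniform limits in Lemma~\ref{seq56tag2}.
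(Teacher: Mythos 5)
Your proof is correct, and its skeleton coincides with the paper's: you observe that $D_6(k,s,\dl)=0$ for $s\le t_k$, use the construction of $B_k^{*}$ (and $X(t_k+)=X(t_k)$) to bound $|X(s-\dl v)-X(t_k+)|$ by $\eps$ on the whole support of the indicator, and thereby reduce the lemma to the uniform estimate $\int_0^{(s-t_k)/\dl}|g_\dl(s,v)|\,dv\le 2/d$ for small $\dl$. Where you diverge is in that final integral estimate. The paper disposes of it in one line by noting $0<s-t_k<\eta_k/2$ and citing Lemma~\ref{seq56tag2}(d); but that statement is uniform only over $t\in[h_0,1]$, so it does not literally cover $s\in B_1^{*}$ when $t_1$ is taken small. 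Your detour --- dropping the absolute value on $(0,h_1/(2\dl))$ using the sign of $g_\dl$ near the diagonal, then invoking Lemma~\ref{seq56tag2}(b), which \emph{is} uniform on all of $[0,1]$, and killing the tail (which only arises when $s-t_k>h_1/2$, hence $s\ge h_1/2$) with Lemma~\ref{seq56tag2}(a) --- is exactly the device the paper itself deploys in the proof of Lemma~\ref{con3}, and it closes that uniformity gap. So your argument is slightly longer but strictly more careful in the regime $s$ near $0$; the paper's is shorter at the cost of appealing to part (d) outside its stated range of uniformity.
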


\begin{proof}
Recall that
\bd
B_{k}^{*}=\big(t_{k}-\frac{\eta_{k}}{2},t_{k}+\frac{\eta_{k}}{2}\big), \ \ k \in \{1,\ldots,m\}.
\ed
Note that
\be \label{D6MIN}
|D_6(k,s,\dl)|=0, \ \ \forall s \in \big(t_{k}-\frac{\eta_{k}}{2},t_k\big], \ \ k \in \{1,\ldots,m\}.
\ee
Hence we handle only the case of $s>t_k$, $s\in B_k^*$. One can easily see that in this case
\begin{eqnarray*}
D_{6}(k,s,\dl)&=& \int_{0}^{(s-t_k)/ \dl} g_{\dl}(s,v) [X(s-\dl v)- X(t_{k}+)]dv.
\end{eqnarray*}
Then by the construction of $B^*_k$, for every $s\in B^{*}_{k}$, $s>t_k$ we have
\be \label{epsbound44}
|X(s-\dl v) - X(t_k)|\leq \eps, \ \ \textrm{for all } v \in (0,(s-t_k)/ \dl].
\ee
We notice that if $s\in B_{k}^{*}$ and $s>t_k$ then $0<s-t_k<\eta_k/2$, for all $k=1,\ldots,m$.
Denote by $\eta=\max_{k=1,\ldots,m}\eta_k$.
Then by (\ref{epsbound44}) and
Lemma~\ref{seq56tag2}(d) we can pick $h_{\ref{con5}}>0$ such that for every $\dl  \in (0,h_{\ref{con5}})$ we have
\begin{eqnarray} \label{Jterm1}
|D_{6}(k,s,\dl)| &\leq& \frac{2\eps}{d}, \ \ \forall s\in \big(t_k,t_{k}+\frac{\eta_{k}}{2}\big),\ \ k \in \{1,\ldots,m\}.
\end{eqnarray}
Then by (\ref{D6MIN}) and (\ref{Jterm1}) for all $\dl  \in (0,h_{\ref{con5}})$, the result follows.
\end{proof} \\\\
Now we are ready to complete the proof of Theorem \ref{thm2}. By Lemmas \ref{firstcor}, \ref{D3D5}, \ref{con4}, \ref{con5} and by Corollary \ref{rcorrd2}, there exists $h^*$ small enough and $C_{\ref{Cleb}}= 6\|X\|_\infty+ \frac{12}{d}$ such that
\be \label{Cleb}
\bigg| \sup_{s\in B_k^*} \frac{|J_{1}(s,\dl)+J_{2}(s,\dl)|}{|\dl f(s+\dl,s)|}- \frac{1}{d}|\Delta _{X}(t_{k})| \bigg| \leq\varepsilon \cdot C_{\ref{Cleb}} , \ \ \forall k \in \{1,\ldots,m\}, \ \ \dl \in (0,h^*), \ \ P-\rm{a.s.}
\ee \\
By (\ref{limsmooth0}), (\ref{chain2}) and (\ref{decY}) we can choose $h\in(0,h^*)$ to be small enough such that
\be \label{all}
 \bigg|\sup_{0 < s<t < 1, \ \ |t-s|\leq \dl,\ \ s \in B_{k}^{*}}\frac{|M(t)-M(s)|}{F(t,s)} - |\Delta _{X}(t_{k})| \bigg| \leq\varepsilon \cdot C_{\ref{all}},  \ \ \forall k \in \{1,\ldots,m\}, \ \ \dl \in (0,h), \ \ P-\rm{a.s.}
\ee
where $C_{\ref{all}}=C_{\ref{Cleb}}+1$.
By the construction of the covering $B_k$, for any point $s \not \in \{t_1,\ldots,t_k\}$, $|\Delta _{X}(s)|\leq 2\eps$. Set $C_{\ref{cnew}} = C_{\ref{all}} + 2$. Then, by (\ref{all}) we get
\begin{eqnarray} \label{cnew}
 \bigg|\sup_{0 < s<t < 1,\ \ |t-s|\leq \dl}\frac{|M(t)-M(s)|}{F(t,s)} - \sup_{s\in[0,1]}|\Delta _{X}(s)| \bigg| &\leq& \varepsilon \cdot C_{\ref{cnew}},  \ \ \forall k \in \{1,\ldots,m\}, \nonumber  \\
&&{} \dl \in (0,h), \ \ P-\rm{a.s.}
\end{eqnarray}
Since $C_{\ref{cnew}}$ is independent of $m$, and since $\eps$ was arbitrarily small the result follows.
\qed

\section{Proof of Theorem \ref{fracM} } \label{fracmsec}
In this section we prove Theorem \ref{fracM}. In order to prove Theorem \ref{fracM} we need the following lemma.
\begin{lemma}  \label{fracL}
Let $L(t)$ be a two-sided L\'{e}vy process with $E[L(1)]=0$, $E[L(1)^2 ]<\infty$ and without a Brownian component. Then for $P$-a.e. $\omega$, for any $t\in \mathds{R}$, $a \leq 0$ such that $t>a$ there exists $\dl^{'}\in(0,t-a)$ such that
\be \label{reslem}
\bigg|\int_{-\infty}^{a}[(t+\dl-r)^{d-1}-(t-r)^{d-1}] L(r) dr \bigg|  \leq C \cdot |\dl|^d, \ \ \forall |\dl|\leq \dl^{'},
\ee
where $C$ is a constant that may depend on $\omega,t,\dl^{'}$.
\end{lemma}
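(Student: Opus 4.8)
The plan is to reduce everything to a single almost-sure polynomial growth bound on the Lévy process at $-\infty$, and then control the integral by a mean value estimate on the kernel $x\mapsto x^{d-1}$. The crucial point, and the reason the hypothesis $E[L(1)^2]<\infty$ together with $d<1/2$ enters, is that the difference integral converges only when $L(r)$ grows strictly slower than $|r|^{1-d}$ as $r\to-\infty$; the strong law of large numbers (which only yields $L(r)=o(|r|)$) is \emph{not} enough, whereas the law of the iterated logarithm is.

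First I would fix $\epsilon\in(0,\tfrac12-d)$ and establish that for $P$-a.e.\ $\omega$ there is a finite constant $C_\omega$ with
$$|L(r)|\le C_\omega\,(1+|r|)^{1/2+\epsilon},\qquad \forall\, r\le 0.$$
Since $E[L(1)]=0$ and $E[L(1)^2]<\infty$, the process is a mean-zero, finite-variance martingale, so the law of the iterated logarithm gives $|L(-n)|=O(\sqrt{n\log\log n})$ almost surely along the integers. The fluctuation of $L$ inside each unit interval $[-(n+1),-n]$ is then controlled by a Borel--Cantelli argument: by stationarity of increments and Doob's $L^2$ maximal inequality the running supremum of $L$ over a unit interval has finite, $n$-independent second moment, so $P\big(\sup>(1+n)^{1/2+\epsilon}\big)\le K/(1+n)^{1+2\epsilon}$ is summable. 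Combining the two yields the displayed bound for all $r\le 0$, and (together with the kernel estimate below) shows that the difference integral in (\ref{reslem}) converges absolutely.

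Next, for $r\le a\le 0$ and $|\delta|<(t-a)/2$, I would apply the mean value theorem to $x\mapsto x^{d-1}$. Because $t-r\ge t-a>0$ and the intermediate point lies between $t-r$ and $t+\delta-r$, it is bounded below by $(t-r)/2$; since $d-2<0$ this gives
$$\bigl|(t+\delta-r)^{d-1}-(t-r)^{d-1}\bigr|\le (1-d)\,2^{2-d}\,|\delta|\,(t-r)^{d-2}.$$
Using the growth bound together with $1+|r|\le C_{t,a}\,(t-r)$ for $r\le a$, I would then estimate
$$\Bigl|\int_{-\infty}^{a}\big[(t+\delta-r)^{d-1}-(t-r)^{d-1}\big]L(r)\,dr\Bigr|\le \tilde C_\omega\,|\delta|\int_{t-a}^{\infty}u^{\,d-3/2+\epsilon}\,du,$$
after the substitution $u=t-r$. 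The exponent satisfies $d-3/2+\epsilon<-1$ by the choice of $\epsilon$, so the integral is a finite constant depending only on $t,a,\omega$, and the whole expression is bounded by a constant times $|\delta|$.

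Finally, choosing $\delta'=\min\{1,(t-a)/2\}\in(0,t-a)$ and using that $|\delta|\le|\delta|^d$ whenever $|\delta|\le 1$ (as $d<1$), the bound in $|\delta|$ upgrades to $C\,|\delta|^d$ for all $|\delta|\le\delta'$, which is exactly (\ref{reslem}). The main obstacle is the first step: obtaining a uniform-in-$r$ growth bound with exponent strictly below $1-d$. This is precisely where both the finite-variance assumption and the restriction $d<1/2$ are indispensable, and one must be careful that the bound holds for \emph{all} $r\le 0$ rather than merely along a subsequence, which is what the Borel--Cantelli/maximal-inequality refinement of the law of the iterated logarithm delivers.
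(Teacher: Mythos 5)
Your proof is correct and follows essentially the same route as the paper's: an almost-sure growth bound $|L(r)|=O(|r|^{1/2+\epsilon})$ at infinity coming from the law of the iterated logarithm for finite-variance L\'evy processes (the paper cites Proposition 48.9 of Sato), combined with a mean-value estimate on $x\mapsto x^{d-1}$ giving an integrable tail $u^{d-3/2+\epsilon}$ and hence a bound of order $|\delta|$, upgraded to $|\delta|^{d}$ for small $\delta$. The only difference is organizational: the paper splits the integral at a random level $N$ and bounds the compact part by the supremum of the c\`adl\`ag path on $[0,N]$, whereas you prove a single global growth bound valid for all $r\le 0$ via a Borel--Cantelli/maximal-inequality interpolation, which is slightly more work but equally valid.
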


\begin{proof}
Fix an arbitrary $t\in \mathds{R}$ and pick $\dl^{'}\in(0,t-a)$.
For all $|\dl|\leq \dl^{'}$
\begin{eqnarray} \label{I2}
 \int_{-\infty}^{a}[(t+\dl-r)^{d-1}-(t-r)^{d-1}]L(r)dr
 &=& \int_{-a}^{N}[(t+\dl+u)^{d-1}-(t+u)^{d-1}]L_2(u)du \nonumber \\ \nonumber \\
 &&{} + \int_{N}^{\infty}[(t+\dl+u)^{d-1}-(t+u)^{d-1}]L_2(u)du  \nonumber \\ \nonumber \\
 &=:& I_{2,1}(N,\dl)+I_{2,2}(N,\dl).
\end{eqnarray}
Now we use the result on the long time behavior of L\'{e}vy processes. By Proposition 48.9 from \cite{sato}, if $E(L_2(1))=0$ and $E(L_2(1)^2)\leq \infty$, then
\be \label{star}
\limsup_{s \rr \infty} \frac{L_2(s)}{(2s \log \log(s))^{1/2}}=(E[L_2(1)^2])^{1/2}, \ \ P-\rm{a.s.}
\ee
Recall that $d\leq 0.5$. Hence by (\ref{star}) we can pick $N=N(\omega)>0$ large enough such that
\begin{eqnarray} \label{I22}
|I_{2,2}(N,\dl)| & \leq & \int_{N}^{\infty}|(t+\dl+u)^{d-1}-(t+u)^{d-1}| u^{1/2+\eps} du   \nonumber  \\
&\leq& C \cdot |\dl|, \ \ \forall \dl \in (-\dl^{'},\dl^{'}), \ \ P-\rm{a.s.}
 \end{eqnarray}
On the other hand, for $\dl$ small enough
\begin{eqnarray} \label{I21}
 |I_{2,1}(N,\dl)| &=& \bigg| \int_{-a}^{N}[(t+\dl+u)^{d-1}-(t+u)^{d-1}]L_2(u)du \bigg|  \\ \nonumber \\
 & \leq & C||L_2(u)||_{[0,N]} |\dl|\cdot (t-a)^{d-1}, \ \ \forall \dl \in (-\dl^{'},\dl^{'}),   \nonumber
 \end{eqnarray}
where
\bd
||L_2(u)||_{[0,N]}=\sup_{u\in[0,N]}|L_2(u)|.
\ed
Then, by (\ref{I22}) and (\ref{I21}) we get for $d<1/2$
\begin{eqnarray} \label{I2221}
|I_{2,1}(N,\dl)+I_{2,2}(N,\dl)| &<& C|\dl|, \ \ \forall \dl \in (-\dl^{'},\dl^{'}),
\end{eqnarray}
and by combining (\ref{I2}) with (\ref{I2221}) the result follows.
\end{proof}

\paragraph{Proof of Theorem \ref{fracM}}
By Theorem 3.4 in \cite{A4} we have
\be \label{parts}
M_d(t)=\frac{1}{\Gamma(d)}\int_{-\infty}^{\infty}[(t-r)^{d-1}_{+}-(-r)^{d-1}_{+}]L(r) dr , \   \ t \in \mathds{R},\ \ P-\rm{a.s.}
\ee
We prove the theorem for the case of $t > 0$. The proof for the case of $t\leq 0$ can be easily adjusted along the similar lines. We can decompose $M_d(t)$ as follows:
\begin{eqnarray*}
M_d(t)&=&\frac{1}{\Gamma(d)}\int_{0}^{t}(t-r)^{d-1}L(r)dr + \frac{1}{\Gamma(d)}\int_{-\infty}^{0}[(t-r)^{d-1} -(-r)^{d-1}]L(r)dr \\\\
&=& M^{1}_d(t) +M^{2}_d(t),  \   \ t \in (0,1),\ \ P-\rm{a.s.}
\end{eqnarray*}
By Lemma \ref{IntParts} we have
\bd
M^{1}_d(t)=\frac{1}{\Gamma(d+1)}\int_{0}^{t}(t-r)^{d} dL_r, \   \ t \in \mathds{R}_{+},\ \ P-\rm{a.s.}
\ed
By Theorem \ref{thm2} we have
\begin{displaymath}
\lim_{h\downarrow 0} \ \  \sup_{0< s<t <1,\ \ |t-s|\leq h}\Gamma(d+1) \frac{|M^{1}_d(t)-M^{1}_d(s)|}{h^d} =  \sup_{s\in [0,1]} |\Delta _{X}(s)| , \ \ P-\rm{a.s.}
\end{displaymath}
Therefore, $P$-a.s. $\omega$, for any $t\in(0,1)$, there exists $\dl_{1}>0$ and $C_1>0$ such that
\be \label{M1}
|M^{1}_d(t+\dl)-M^{1}_d(t)|\leq C_1|\dl|^d, \ \ \forall \dl\in (-\dl_1,\dl_1).
\ee
By Lemma \ref{fracL}, $P$-a.s. $\omega$, for any $t\in(0,1)$, there exists $\dl_{2}>0$ and
$C_2=C_2(\omega,t)>0$ such that
\be \label{M2}
|M^{2}_d(t+\dl)-M^{2}_d(t)|\leq C_2|\dl|^d, \ \ \forall \dl\in (-\dl_2,\dl_2).
\ee
Hence by (\ref{M1}) and (\ref{M2}), $P$-a.s. $\omega$, for any $t\in(0,1)$, we can fix $\dl_3$ and $C=C(\omega,t)$ such that,
\bd
|M_d(t+\dl)-M_d(t)| \leq C|\dl|^d,  \ \ \forall \dl\in (-\dl_3,\dl_3),
\ed
and we are done.
\qed \\

\bibliographystyle{plain}
\printindex

\end{document}